\newcommand{\noun}[1]{\textsc{#1}}
\theoremstyle{plain}
\newtheorem{theorem}{Theorem}
\newtheorem{proposition}{Proposition}
\newtheorem{lemma}{Lemma}
\newtheorem{corollary}{Corollary}
\theoremstyle{definition}
\theoremstyle{remark}
\newtheorem{remark}{Remark}
\newcommand{\lcm}[1]{\operatorname{lcm}\left(#1 \right)}
\def\RR{\mathbb{R}}
\def\CC{\mathbb{C}}
\def\ZZ{\mathbb{Z}}
\def\NN{\mathbb{N}}
\newcommand{\bd}{\mathbf}
\newcommand{\Vgf}{\mathcal{V}_gf}
\renewcommand{\mod}{~\operatorname{mod}~}
\begin{document}

\begin{frontmatter}
\title{Efficient algorithms for discrete Gabor transforms on a nonseparable
lattice}

\author[nhg]{Christoph Wiesmeyr}
\ead{christoph.wiesmeyr@univie.ac.at}
\author[ari]{Nicki Holighaus}
\author[ari]{Peter L. S{\o}ndergaard}

\address[nhg]{Numerical Harmonic Analysis Group, Faculty of Mathematics, 
University of Vienna, Austria}
\address[ari]{Acoustics Research Institute, 
Austrian Academy of Sciences, Vienna, Austria}

\begin{abstract}
The Discrete Gabor Transform (DGT) is the most commonly used 
transform for signal analysis and synthesis using a linear frequency
scale. It turns out that the involved operators are rich in structure
if one samples the discrete phase space on a subgroup. Most of the
literature focuses on separable subgroups, in this paper we will
survey existing methods for a generalization to arbitrary groups, as
well as present an improvement on existing methods. Comparisons are made with
respect to the computational complexity, and the running time of
optimized implementations in the C programming language. The new
algorithms have the lowest known computational complexity for
nonseparable lattices and the implementations are freely available
for download. By summarizing general background information on the
state of the art, this article can also be seen as a research survey,
sharing with the readers experience in the numerical work in Gabor
analysis.
\end{abstract}

\begin{keyword}
Discrete Gabor transform, algorithm, implementation
\end{keyword}

\end{frontmatter}

\section{Introduction}

Over the past 20 years the \emph{Gabor transform} has become a very valuable
and widely used tool in signal processing. The
finite, discrete \emph{Short time Fourier transform} (STFT) for a given
signal $f$ of length $L$ is computed by testing $f$ against
shifted and modulated copies of a window function $g$
\begin{equation*}
  \mathcal V_g f(x,\omega)=\sum_{l=0}^{L-1} f(l)\overline{g(l-x)}e^{-2 \pi i \omega l/L}.
\end{equation*}
The Gabor transform is a sampled version of the STFT and both provide
the possibility to extract temporal frequency information from the signal.
The space spanned by the two variables $x, \omega$ is called the \emph{time-frequency
plane}; more precise information can be found in Section \ref{sec:prelims}. A family
of translations and modulations of a window function is called \emph{Gabor family} 
or \emph{Gabor system}.

There exist a continuous counterparts of the STFT and the Gabor transform.
The time frequency plane is $\RR^2$ in this case and 
general sampling sets have received attention, e.g. \citep{fegr92-1}.
Sampling this plane on a discrete subgroup, also called \emph{lattice}
\citep{gr01} admits rich structure, as described in the next section.
It has recently been conjectured
that for a standard Gaussian window the best sampling strategy is
a regular hexagonal pattern \citep{abdo12}. Geometric arguments have also
lead to the same sampling strategies in undersampled systems for
pulse shape design in wireless channel estimation \citep{best03}.

In the discrete setting efficient algorithms exist almost exclusively for
sampling on separable or rectangular lattices \citep{ltfatnote015},
the most important ones discovered quickly after finding the Fast
Fourier transform algorithm \citep{cooley1965algorithm}. The two approaches that
are most commonly used are the {\em overlap-add algorithm},
\citep{helms1967fast, stockham1966high} and the {\em weighted
  overlap-add algorithm} \citep{schafer1973design,po76}. Both of these
algorithms require that the window is {\em Finite Impulse Response}
(FIR), i.e. the size of its support is much smaller than its
length. Fast, but less well known algorithms without
this requirement have also been found \citep{bage96,ltfatnote011}.

It is a natural question how to generalize existing algorithms for the Gabor transform
and its inverse to the case of nonseparable lattices. 
In the early years of this century there has been a series of papers and investigations
on this subject \citep{bastiaans1998rectangular, bastiaans1998modified,van2000gabor,bastiaans2001gabor,bastiaans2003gabor} 
and more by the same authors, also
collected in~\citep{va01-2}. Earlier studies focus on the computation of dual Gabor windows
on nonseparable lattices, using iterative methods~\citep{fekapr97} or harnessing the
block structure of Gabor analysis and frame operators directly and reducing
nonseparable sampling sets to a union of product lattices~\citep{pr96,fekoprst96}. 
Another contribution came some years later further investigating 
the discrete theory of \emph{metaplectic
operators} \citep{fehakamane08}. In this paper we present
approaches from these works and propose an improved algorithm,
which allows for more efficient computation.

There are two fundamentally different ways of realizing computations
that we will investigate and improve upon. The first one uses a
decomposition of a nonseparable lattice into the union of co-sets of a
sparser separable lattice similar to \citep{fekoprst96,zezi97,bava04,va01-2}. 
This will allow to
write the Gabor family as a union of Gabor families on this sparse
lattice with different windows.  We call such a system \emph{multiwindow
Gabor} family, since it shares much of the structure from standard
Gabor systems \citep{zezi93}. The details can be found in Subsection
\ref{ssec:multiwin}.

The second method under consideration uses the fact that any lattice
can be written as the image of a rectangular lattice under an
invertible lattice transform. For a special subset of these
transforms, so called \emph{symplectic operators} on the signal space
exist that allow to reduce all the computations for Gabor
systems on nonseparable lattices to Gabor systems on rectangular
lattices. It turns out that in the $1$ dimensional setting the
transformation to the separable case is always possible
\citep{fehakamane08,kane09}. This method has first been described for
the continuous case, a summary can be found in \citep{gr01} and then
translated into the finite discrete setting, where it takes more
effort to obtain the results due to number theoretic
considerations. The algorithms presented in Subsection \ref{ssec:snf}
are based on the results in \citep{fehakamane08} and improved in 
\ref{ssec:shears}.
    
In higher dimensions the class of lattices that can be reduced to
a rectangular sampling strategy is expected to be a strict subset of all
lattices. While the class of lattice transforms that admit a
symplectic operator, called \emph{symplectic matrices}, can be
determined explicitly it is not easy to see whether a given lattice
can be transformed to rectangular shape using this class of matrices.
In contrast to the difficulties with generalizing the metaplectic
approach to higher dimensions, the multiwindow decomposition
can be extended directly. However, the description of the multidimensional 
case is beyond the scope of this contribution.

After introducing the necessary basic concepts in Section \ref{sec:prelims}, 
 we mainly present the different approaches in Section \ref{sec:nonseplatts}.
 Section \ref{sec:Implementation} describes the implementation
of the different algorithms and compares their computational complexity and running time.

\section{Preliminaries}\label{sec:prelims}
We use the ``$\cdot$'' notation in conjunction with the DFT to denote
the variable over which the transform is to be applied.

\subsection{Gabor frames on subgroups of the TF-plane}

  We recall some basics from Gabor analysis, frame theory and the theory of metaplectic operators on $\CC^L$. A \emph{Gabor system} in $\CC^L$ is a set of functions of the form
  \begin{equation}\label{eq:GabSys1}
    \mathcal{G}(g,\Lambda) := \{ \bd{M}_{\omega} \bd{T}_x g ~:~ (x,\omega)^T\in\Lambda\subseteq \ZZ_L^2 \},
  \end{equation}
  where $g\in \CC^L$ and $\mathbf{T}_x$, $\mathbf{M}_{\omega}$ denote a time shift by $x$ and a frequency shift (or modulation) by $\omega$, i.e.
  \begin{equation*}
    \mathbf{T}_x f(l) = f(l-x) \quad \text{and} \quad \mathbf{M}_\omega f(l) = e^{2\pi i l\cdot \omega/L}f(l),
  \end{equation*}
  with $l-x$ considered modulo $L$. Thus, a Gabor system is a set of time-frequency shifts of a fixed function $g$. For some given $x$ and $\omega$ we introduce also the notation of a time-frequency shift operator
  \begin{equation*}
   \pi(x,\omega) = \bd{M}_{\omega} \bd{T}_x.
  \end{equation*}

  The Gabor coefficients of some $f\in \CC^L$, with respect to $\mathcal{G}(g,\Lambda)$ are given by the samples of the Short-time Fourier transform 
  \begin{equation*}
    \Vgf (x,\omega) = \langle f, \bd{M}_{\omega} \bd{T}_x g \rangle = \sum_{l=0}^{L-1} f(l)\overline{g(l-x)}e^{-2\pi i\omega l/L}, 
  \end{equation*}
  for $(x,\omega)^T\in \Lambda$.
  
  It is important to know if the signal $f$ can be reconstructed from its transform coefficients $\{c_{x,\omega}=\Vgf(x,\omega)\}_{(x,\omega)^T\in\Lambda}$. If so, we call the Gabor system $\mathcal{G}(g,\Lambda)$ a frame. It turns out that this is equivalent to the invertibility of the so-called frame operator defined as
  \begin{equation}\label{eq:frameop} 
    \mathbf{S}_{g,\Lambda}f = \sum_{(x,\omega)^T\in \Lambda} \langle f, \pi(x,\omega)g \rangle \pi(x,\omega)g. 
  \end{equation} 
  From here on, we will use the shorthand notation $\mathbf S = \mathbf{S}_{g,\Lambda}$ whenever there is no confusion as to the Gabor system $\mathcal{G}(g,\Lambda)$ used. By inversion of this operator we can give an explicit inversion formula
  \begin{equation*}
   f =   \sum_{(x,\omega)^T\in \Lambda} c_{x,\omega} \mathbf{S}^{-1}\pi(x,\omega)g.
  \end{equation*} 
  The family $\{\mathbf{S}^{-1}\pi(x,\omega)g\}_{(x,\omega)^T\in\Lambda}$ is called the (canonical) dual Gabor system. If $\Lambda$ is a subgroup of the phase space, then we know from standard Gabor theory that the dual system is a Gabor system itself, given by $\mathcal G (\mathbf S^{-1}g,\Lambda)$ , see e.g. \citep{gr01}.
  In the following we will only consider this structured case and denote the subgroup relation by $\Lambda \leq \ZZ_L^2$.

  It is easy to see that for any matrix $A \in \ZZ_L^{2\times 2}$ the set $A \ZZ_L^2$ forms a subgroup of the time-frequency plane. The following proposition  shows that the converse is also true.
  Furthermore, it suggests a normal form that allows us to establish a one to one relation between lattices and generating matrices. Further implications of this bijection can be found in \citep{hahotowi12}.
  \begin{proposition}\label{pro:lattNF}
    For every $\Lambda \leq \ZZ_L^2$ there exist unique $a,b|L$, $0\leq s < b$ and $s\in\frac{ab}{\gcd(ab,L)}\ZZ$, such that 
    \begin{equation}\label{eq:lattNF}
      \Lambda = A\ZZ_L^2 = \left(\begin{array}{cc} a & 0 \\ s & b \end{array}\right)\ZZ_L^2.
    \end{equation}
  \end{proposition}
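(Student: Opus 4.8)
The plan is to exploit the fact that a subgroup of $\ZZ_L^2$ is completely controlled by its image and its kernel under the projection onto the time coordinate, mirroring the construction of a Hermite normal form but carried out directly in $\ZZ_L$. Let $p\colon \ZZ_L^2 \to \ZZ_L$, $p(x,\omega)^T = x$, be this projection. For a given $\Lambda \leq \ZZ_L^2$, the image $p(\Lambda)$ is a subgroup of the cyclic group $\ZZ_L$ and hence equals $a\ZZ_L$ for a unique divisor $a\mid L$ (each subgroup of $\ZZ_L$ is generated by a unique such divisor, since $|a\ZZ_L| = L/a$ when $a\mid L$, so distinct divisors yield subgroups of distinct order). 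Likewise $\Lambda \cap \ker p$ lies in $\{0\}\times\ZZ_L \cong \ZZ_L$ and thus equals $\{0\}\times b\ZZ_L$ for a unique $b\mid L$. These two invariants will supply the diagonal entries of $A$.

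For existence I would proceed as follows. Since $a \in p(\Lambda)$, I may choose $(a,s_0)^T \in \Lambda$, and after subtracting a suitable multiple of the kernel generator $(0,b)^T$ I may arrange $0\le s<b$; take $s$ to be this representative. I then claim $\Lambda = A\ZZ_L^2$ with $A = \left(\begin{smallmatrix} a&0\\ s&b\end{smallmatrix}\right)$: given any $(x,\omega)^T\in\Lambda$ we have $x\in a\ZZ_L$, so $x\equiv ma \pmod L$ for some $m$, and then $(x,\omega)^T - m(a,s)^T$ lies in $\Lambda\cap\ker p = \{0\}\times b\ZZ_L$, exhibiting $(x,\omega)^T$ as an integer combination of the two columns of $A$; the reverse inclusion is immediate. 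The divisibility constraint is then forced: applying the multiplier $L/a$ to the first column gives $(L/a)(a,s)^T \equiv (0,(L/a)s)^T \pmod L$, an element of $\Lambda\cap\ker p = \{0\}\times b\ZZ_L$, so $b\mid (L/a)s$, i.e. $ab\mid Ls$. Writing $d=\gcd(ab,L)$ and cancelling $d$ — using that $ab/d$ and $L/d$ are coprime — yields $(ab/d)\mid s$, which is exactly $s\in\frac{ab}{\gcd(ab,L)}\ZZ$.

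For uniqueness I would run the invariants backwards. If $A\ZZ_L^2 = A'\ZZ_L^2$ are two representations in the stated form, then applying $p$ gives $a\ZZ_L = a'\ZZ_L$, whence $a=a'$ since both are divisors of $L$; intersecting with $\ker p$ gives $b\ZZ_L = b'\ZZ_L$, whence $b=b'$; and finally $(a,s)^T-(a,s')^T=(0,s-s')^T\in\Lambda\cap\ker p=\{0\}\times b\ZZ_L$ forces $s\equiv s'\pmod b$, so $s=s'$ by the range $0\le s,s'<b$.

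I expect the main obstacle to be the number-theoretic bookkeeping rather than the group-theoretic skeleton. The delicate point is that reading $b$ off the kernel is legitimate only because the divisibility condition $b\mid (L/a)s$ is precisely what makes $\{0\}\times\langle (L/a)s,\,b\rangle$ collapse to $\{0\}\times b\ZZ_L$; this is what links the normal form to the kernel invariant in the uniqueness step. Care is also needed in the coprimality step $ab\mid Ls \iff \frac{ab}{\gcd(ab,L)}\mid s$, and in the repeated use that a subgroup of $\ZZ_L$ has a unique divisor generator, which underpins both the well-definedness of $a,b$ and the uniqueness argument.
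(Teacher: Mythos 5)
Your proposal is correct and follows essentially the same route as the paper: both identify $a$ as the generator of the time-projection of $\Lambda$, $b$ as the generator of $\Lambda\cap(\{0\}\times\ZZ_L)$, and $s$ as the reduced second coordinate of a preimage of $a$, with the divisibility condition forced by $(L/a)s\in b\ZZ$. The only differences are cosmetic: you establish $\Lambda = A\ZZ_L^2$ by direct decomposition (subtracting a multiple of $(a,s)^T$ to land in the kernel) where the paper counts cardinalities, and your invariant-based uniqueness argument is a cleaner rendering of the paper's.
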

    \begin{proof}
      Existence: For $\Lambda$ to be a subgroup of $\ZZ^2_L$, $B:= \{\omega\in\ZZ_L ~:~ (0,\omega)\in\Lambda\} \leq \ZZ_L$ must hold. Set $b = \min(B)$ and $a = \min\{x\in\ZZ_L ~:~ \{(x,\omega)^T\in\Lambda\}\neq 0\}$, then $a,b|L$ and the cardinality of $\Lambda$ is $|\Lambda| = L^2/(ab)$, i.e. $\Lambda$ has $L/a$ equidistant nonempty columns, with $L/b$ equidistant elements each. Finally, set $s = \min\{\omega\in\ZZ_L ~:~ (a,\omega)\in\Lambda\}$, then $0\leq s<b$ follows easily. $s\in\frac{ab}{\gcd(ab,L)}\ZZ$ is obtained by observing that $sL/a \in b\ZZ$ must be fulfilled. Obviously, the linear span of $\{(a,s),(0,b)\}$ is contained in $\Lambda$ and of cardinality $L^2/(ab)$, hence equality holds.
      
      Uniqueness: Let $a,b,s$ be as constructed above. Since the cardinality of $\Lambda$ depends on the product $ab$, any change of $a$ implies a change of $b$. The condition $b|L$ guarantees $B = \{mb ~:~ m\in\ZZ_{L/b}\}\neq \{m\tilde{b} ~:~ b\neq\tilde{b}|L, m\in\ZZ_{L/\tilde{b}}\}$ determining $a,b|L$ uniquely. With $\{\omega\in\ZZ_L ~:~ (a,\omega)\in\Lambda\} = \{s+mb ~:~ m\in\ZZ_{L/b}\}$, we see that $\tilde{s}=s+mb \geq b$ if and only if $m\neq 0$.
    \end{proof}
  
  With $a,b,s$ as in \eqref{eq:lattNF}, we define 
  \[ \begin{split} \lefteqn{\mathcal{G}(g,a,b,s):=\mathcal{G}(g,\Lambda)} & \\ & = \{ g_{n,k} := \bd{M}_{sn+bk} \bd{T}_{an} g ~:~ (n,k)\in\ZZ_{L/a}\times\ZZ_{L/b} \}\end{split} \]
  for $\Lambda = A\ZZ_L^2$, omitting $s$ if it equals zero. Lattices with $s=0$ are called \emph{separable, rectangular} or \emph{product lattices}, since they can be written as the direct product of two subgroups of $\ZZ_L$. If $s\neq 0$, we call a lattice \emph{nonseparable}. It is easy to see that the unique lower triangular form can be rewritten into an upper triangular matrix.

  \begin{proposition} \label{pro:equilatt}
  Given a subgroup $\ZZ_L^2$ in normal form, i.e. given $a$, $b$ and $s$. Then the following representations are equivalent
  \begin{equation*}
    \begin{pmatrix}
      a & 0 \\
      s & b
    \end{pmatrix} \cdot \ZZ_L^2 = 
    \begin{pmatrix}
      \tilde a & \tilde s \\
      0 & \tilde b
    \end{pmatrix} \cdot \ZZ_L^2,
  \end{equation*}
  where $\tilde b=\gcd(b,s)$, $\tilde a = ab/\gcd(b,s)$. Furthermore, we use B\'ezout's identity to represent $k_1 s + k_2 b = \gcd(b,s)$, then $\tilde s = k_1 a$. 
  \end{proposition}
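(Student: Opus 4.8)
The plan is to relate the two generator matrices by a change of basis of the index set $\ZZ_L^2$, so that they describe the same lattice. The underlying principle is elementary: if $U$ is invertible over $\ZZ_L$, then $U\ZZ_L^2 = \ZZ_L^2$, and hence $AU\ZZ_L^2 = A\ZZ_L^2$ for any $A$. In particular this holds whenever $U$ is an integer matrix with $\det U = \pm 1$, since such a $U$ reduces mod $L$ to an invertible matrix over $\ZZ_L$ for every $L$. Thus it suffices to exhibit a single $U \in \operatorname{SL}_2(\ZZ)$ with
\[
  \begin{pmatrix} a & 0 \\ s & b \end{pmatrix} U = \begin{pmatrix} \tilde a & \tilde s \\ 0 & \tilde b \end{pmatrix}.
\]

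First I would set $d = \gcd(b,s)$ and fix Bézout coefficients with $k_1 s + k_2 b = d$, exactly as in the statement. Matching the first column forces the lower-left entry of the product to vanish while scaling $a$ up to $\tilde a = ab/d$, which motivates the choice
\[
  U = \begin{pmatrix} b/d & k_1 \\ -s/d & k_2 \end{pmatrix},
\]
where $b/d$ and $s/d$ are integers because $d$ divides both $b$ and $s$. It then remains to carry out the (routine) $2\times 2$ multiplication and check the four entries: the top row gives $a\cdot(b/d)=\tilde a$ and $a k_1 = \tilde s$, the bottom-left entry collapses to $sb/d - bs/d = 0$, and the bottom-right entry is $k_1 s + k_2 b = d = \tilde b$.

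The only point that genuinely requires the number theory is verifying that $U$ is admissible, i.e. that $\det U = \pm 1$. Here $\det U = (b k_2 + s k_1)/d = d/d = 1$, which is precisely the Bézout relation in disguise. This is the crux of the argument: because the determinant equals exactly $1$ rather than merely being a unit mod $L$, the change of basis is valid over $\ZZ_L$ for every $L$, with no coprimality hypothesis on $a$, $b$, $s$, or $L$. I do not expect any serious obstacle beyond correctly guessing the entries of $U$; once the Bézout identity is invoked, both the matrix identity and $\det U = 1$ follow by inspection.
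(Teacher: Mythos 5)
Your proof is correct and is essentially identical to the paper's: you exhibit the same unimodular matrix $U = \left(\begin{smallmatrix} b/d & k_1 \\ -s/d & k_2 \end{smallmatrix}\right)$, verify the product equals the upper triangular form, and conclude via $\det U = 1$ that $U\ZZ_L^2 = \ZZ_L^2$. The only difference is that you spell out the entry-by-entry verification and the determinant computation, which the paper leaves as ``by computation.''
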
  
  \begin{proof}
  By computation one can verify that
  \begin{equation*}
    \begin{pmatrix}
      a & 0 \\
      s & b
    \end{pmatrix} \cdot
    \begin{pmatrix}
      b/\gcd(b,s) & k_1 \\
      -s/\gcd(b,s) & k_2
    \end{pmatrix}=
    \begin{pmatrix}
	\tilde a & \tilde s \\
	0 & \tilde b
    \end{pmatrix}
  \end{equation*}
  The second matrix has determinant $1$ and therefore is invertible. The assertion follows because $Q\cdot \ZZ_L^2 = \ZZ_L^2$ for any invertible matrix.
  \end{proof}  
  
In some cases we will switch to another description of a subgroup as it comes up more natural in some settings. Instead of the shear parameter $s$, one can also use the shear relative to $b$, given by
\begin{equation*}
    \lambda = \frac{s}{b} = \frac{\lambda_1}{\lambda_2}, \text{ with } \lambda_1 = \frac{s}{\gcd(b,s)},\ \lambda_2 = \frac{b}{\gcd(b,s)},
\end{equation*}
This easily explains how to convert $s$ into $\lambda_1$ and $\lambda_2$ and vice versa. A visualization can be found in Figure \ref{fig:latticetypes}. Unlike in the case of separable lattices, there is no immediate natural way of indexing the Gabor coefficients. However, it seems sensible to index by the position in time and counting the sampling points in frequency from the lowest nonnegative frequency upwards. Therefore we will fix
\begin{equation}
  c\left(m,n\right)=\sum_{l=0}^{L-1}f(l)\overline{g(l-an+1)}e^{-2\pi il(m+w(n))/M},
  \label{eq:DGT}
\end{equation}
for the rest of this contribution, where the additional offset $w$ is given by $w(n)=\mod(n\lambda_1,\lambda_2)/\lambda_2$. This format is also implemented in the open source \noun{MATLAB}/\noun{Octave} Toolbox \emph{LTFAT}~\citep{ltfatweb}, used for the experiments in Section \ref{sec:Implementation}.

\begin{figure}[t!h]

\subfloat[\vspace{-3pt}$\lambda_{1}/\lambda_{2}=0$]
{\includegraphics[width=0.40\textwidth,trim=0 20 0 0, clip]{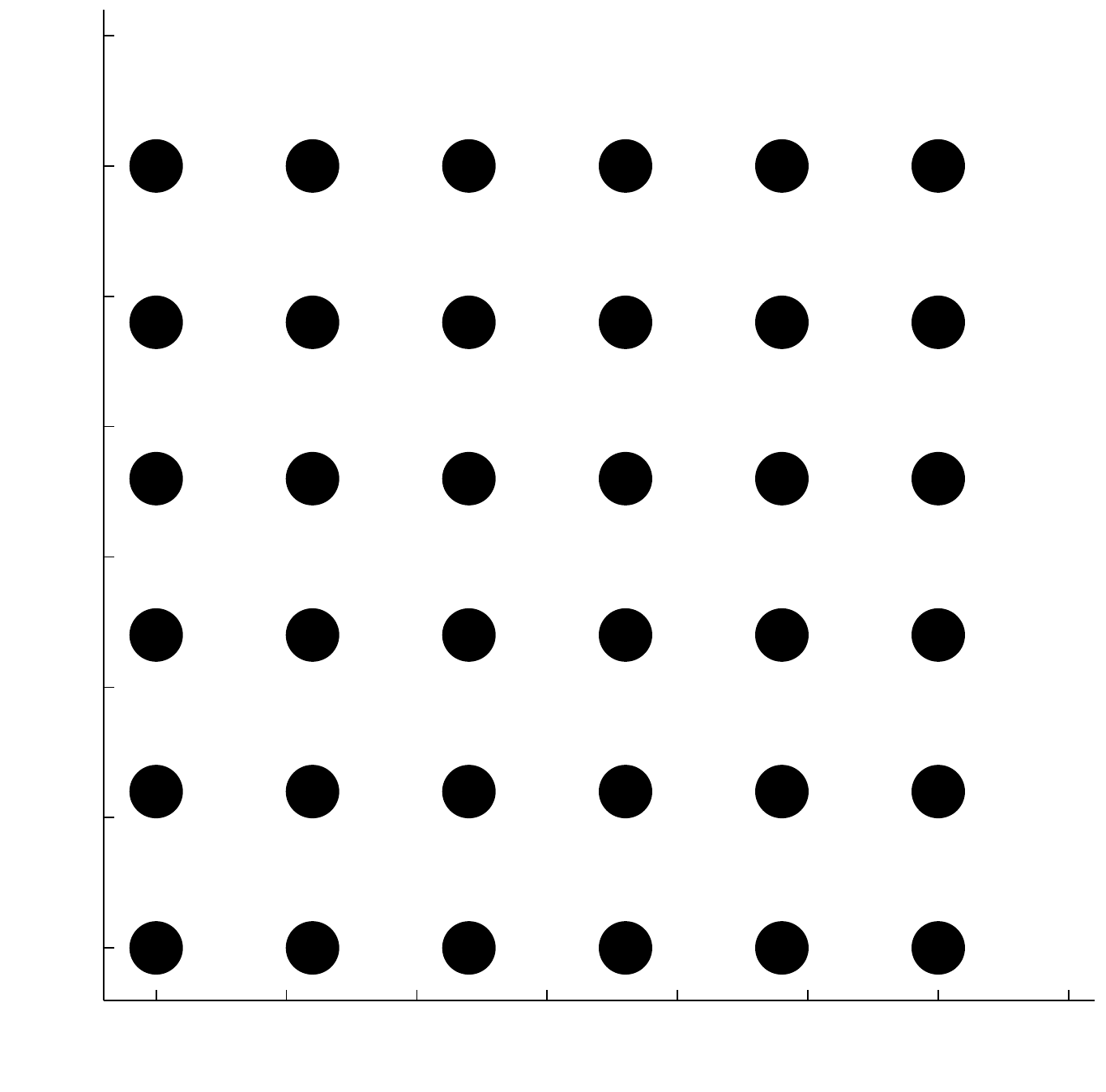}}
{\hfill}\subfloat[\vspace{-3pt}$\lambda_{1}/\lambda_{2}=1/2$]
{\includegraphics[width=0.40\textwidth,trim=0 20 0 0, clip]{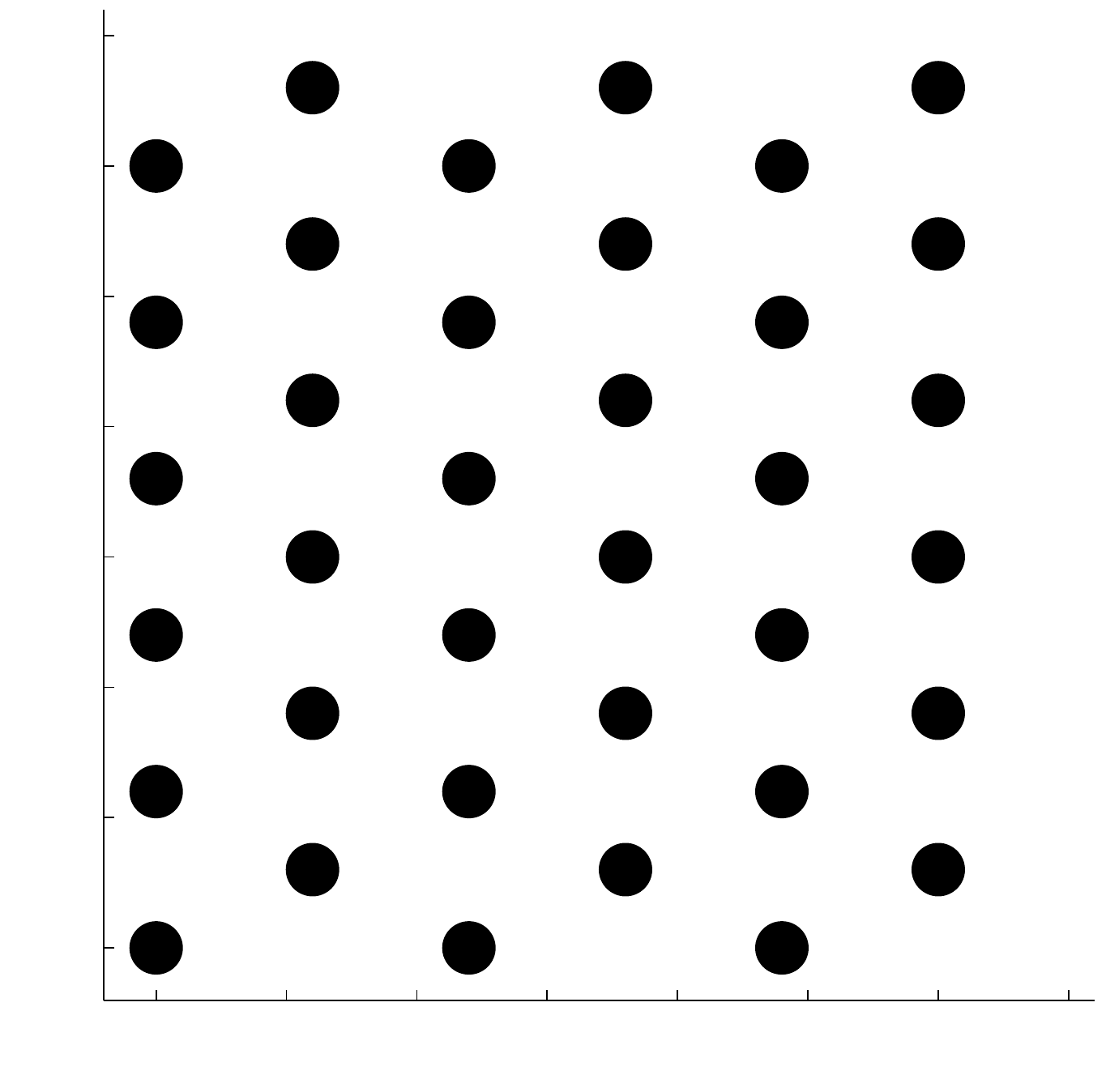}}

\subfloat[\vspace{-3pt}$\lambda_{1}/\lambda_{2}=1/3$]
{\includegraphics[width=0.40\textwidth,trim=0 20 0 0, clip]{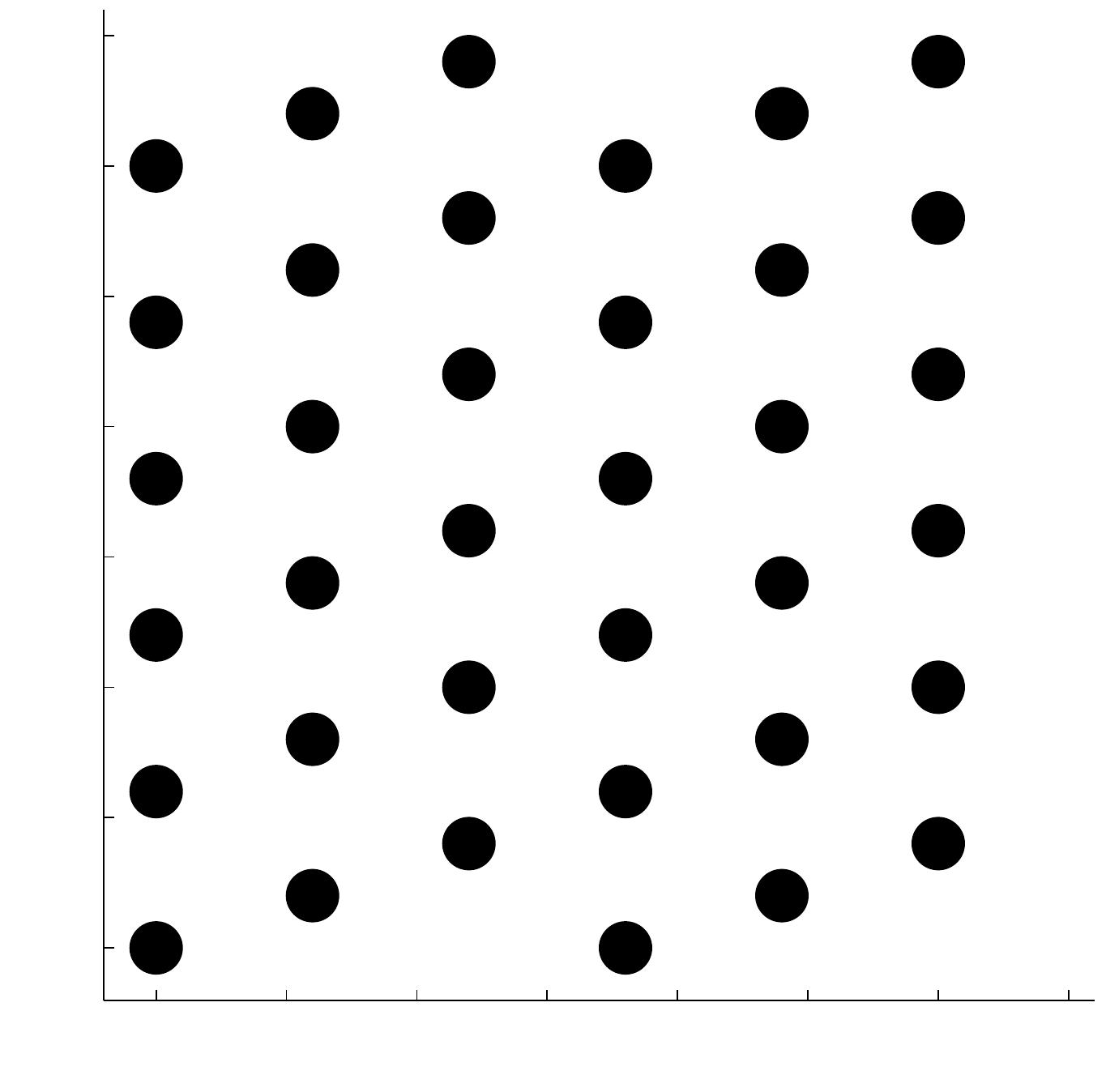}}
{\hfill}\subfloat[\vspace{-3pt}$\lambda_{1}/\lambda_{2}=2/3$]
{\includegraphics[width=0.40\textwidth,trim=0 20 0 0, clip]{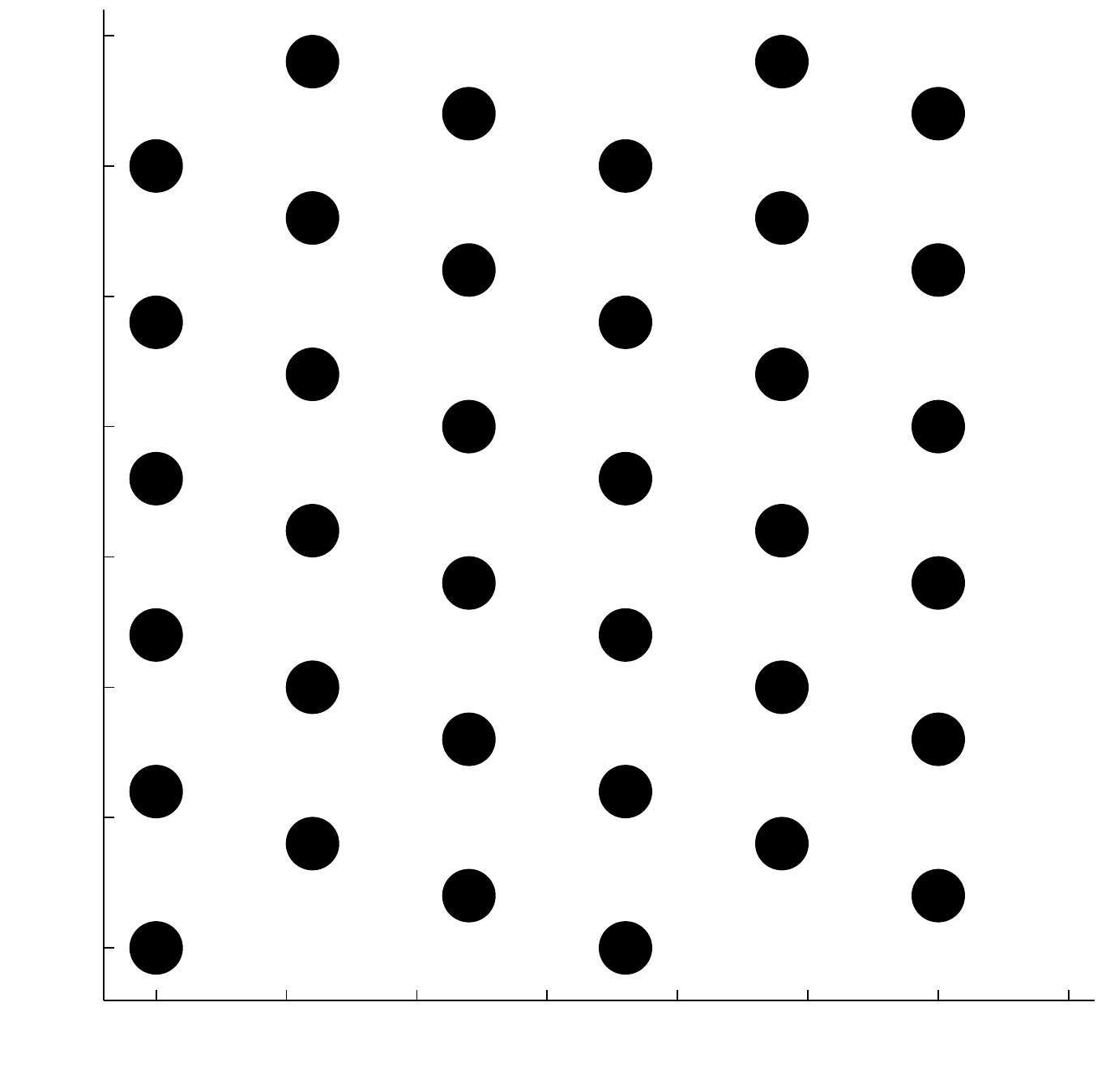}}

\caption{\label{fig:latticetypes}The figure shows the placement of
  the Gabor atoms for four different lattice types in the
  time-frequency plane . The displayed Gabor system has parameters $a=6$, $M=6$ and
  $L=36$. The lattice (a) is  called {\em rectangular} or
  {\em separable} and the lattice (b) is known as the {\em quincunx} lattice.}

\end{figure}

\subsection{Metaplectics}\label{ssec:metaplecs}
  
A metaplectic operator, loosely speaking, is the signal domain counterpart to a symplectic transform of the lattice on phase space. A comprehensive treatment of these operators in the finite discrete setting can be found in \citep{kane09}. In this contribution we will be focusing on the one dimensional setting, for which the operators are described in detail in \citep{fehakamane08}. In this section we will formulate some results that will prove to be important in subsequent sections. We start by the factorization of a lattice generator into elementary matrices, which we will denote by
\begin{equation}\label{eq:elem_sympl}
  \begin{split}
  \lefteqn{F = \left(\begin{array}{cc} 0 & -1 \\ 1 & 0 \end{array}\right), \quad S_c = \left(\begin{array}{cc} 1 & 0 \\ c & 1     \end{array}\right),} \hspace{40pt}& \\  
	    & D_a = \left(\begin{array}{cc}
	      a & 0 \\ 0 & a^{-1}
	    \end{array}\right), \hspace{40pt}
  \end{split}
\end{equation}
where $c\in \ZZ_L$ and $a \in \ZZ_L$ invertible.

\begin{proposition}[Feichtinger et al. (2008)~\citep{fehakamane08}]\label{pro:weyldecompose}
  Let $M = \left(\begin{smallmatrix} a & b \\ c & d\end{smallmatrix}\right)\in\ZZ_L^2$ with $\det(M) = 1$, then there exists $m\in\ZZ$ such that $a_0 = a + mb$ is invertible in $\ZZ_L$. Let $c_0 = c +md$, then 
  \begin{equation*}
    M = S_{c_0a_0^{-1}}D_{a_0}F^{-1}S_{-a_0^{-1}b}FS_{-m}.
  \end{equation*}
\end{proposition}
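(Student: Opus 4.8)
The plan is to split the argument into two independent pieces: a number-theoretic existence step that produces the integer $m$, and a purely computational verification of the stated factorization.

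First I would establish the existence of an $m$ for which $a_0 = a + mb$ is invertible in $\ZZ_L$, i.e.\ $\gcd(a_0,L)=1$. The key leverage is the determinant condition $ad-bc=1$ in $\ZZ_L$: for any prime $p\mid L$, if $p$ divided both $a$ and $b$ it would divide $ad-bc=1$, which is impossible, so for each such $p$ at least one of $a,b$ is a unit modulo $p$. I would then argue prime by prime that the residues of $m$ forcing $a+mb\equiv 0 \pmod p$ form at most one class modulo $p$: exactly one when $p\nmid b$ (namely $m\equiv -ab^{-1}$), and none when $p\mid b$, since then $p\nmid a$ and $a+mb\equiv a\not\equiv 0$. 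As every prime satisfies $p\geq 2$, each modulus admits at least one admissible residue, and the Chinese Remainder Theorem lets me select a single $m$ avoiding every forbidden class across the distinct primes dividing $L$. This $m$ makes $a_0$ coprime to $L$, so $a_0^{-1}$ exists and $D_{a_0}$ is well defined.

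Second, with $\alpha := a_0^{-1}$ and $c_0 = c + md$ fixed, I would verify the identity by multiplying the right-hand side out, working from the rightmost factor. Computing $FS_{-m}$ first, then applying $S_{-\alpha b}$, $F^{-1}$ and $D_{a_0}$ in turn, the crucial cancellation is $a_0\alpha = 1$: together with $a_0 = a+mb$ it collapses the top row of the intermediate product to $(a,b)$ after the $D_{a_0}$ step. The remaining factor $S_{c_0\alpha}$ touches only the bottom row, and here the determinant hypothesis re-enters: reducing the $(2,1)$ entry to $c$ uses $da-1=bc$, while reducing the $(2,2)$ entry to $d$ uses $da-cb=1$, both of which are merely restatements of $\det M = 1$.

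I expect the existence step to carry the genuine conceptual content, as the selection of $m$ is exactly where the global structure of $L$ through its prime factorization interacts with the determinant condition. The factorization identity itself is routine matrix algebra; the only place demanding care is the bottom row of the final product, where one must remember to substitute $c_0 = c+md$ and invoke $\det M = 1$ rather than expecting the entries to simplify on their own.
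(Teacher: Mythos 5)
Your proof is correct, but it cannot be compared against a proof in the paper for a simple reason: the paper does not prove Proposition \ref{pro:weyldecompose} at all. The result is quoted from Feichtinger et al.~\citep{fehakamane08}, followed only by the one-line remark that the proof rests on Weil's decomposition of symplectic matrices into elementary ones. Your argument is therefore a self-contained replacement for that citation rather than a variant of anything in the text. Both halves hold up. The existence step is sound: for any prime $p\mid L$ the condition $ad-bc\equiv 1\pmod{p}$ forbids $p\mid a$ and $p\mid b$ simultaneously, so the residues of $m$ with $a+mb\equiv 0\pmod{p}$ form at most one class modulo $p$ (exactly one if $p\nmid b$, none if $p\mid b$), and since $p\geq 2$ the Chinese Remainder Theorem yields an $m$ avoiding every forbidden class, making $\gcd(a+mb,L)=1$. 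The verification also goes through exactly as you describe: multiplying from the right gives
\begin{equation*}
D_{a_0}F^{-1}S_{-a_0^{-1}b}FS_{-m} \;=\; \begin{pmatrix} a_0 - mb & b \\ -a_0^{-1}m & a_0^{-1} \end{pmatrix} \;=\; \begin{pmatrix} a & b \\ -a_0^{-1}m & a_0^{-1} \end{pmatrix},
\end{equation*}
and the final factor $S_{c_0a_0^{-1}}$ leaves the top row untouched while sending the bottom row to $\left(a_0^{-1}(c_0a-m),\, a_0^{-1}(c_0b+1)\right) = (c,d)$, since $c_0a-m = ca+m(da-1) = c\,a_0$ and $c_0b+1 = cb+1+mdb = d\,a_0$, both restatements of $\det M =1$. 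What your route buys is an elementary, checkable argument whose existence step is moreover constructive --- the CRT selection of $m$ is essentially what an implementation such as \textsc{shearfind} must carry out anyway; what the paper's citation buys is brevity, deferring the number theory to \citep{fehakamane08}.
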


The proof is based on Weil's decomposition of arbitrary symplectic matrices into a composition of elementary symplectic matrices as in \eqref{eq:elem_sympl}.

\begin{lemma}\label{lem:meta_phases}
 For the above defined matrices we define the corresponding metaplectic operators as follows
 \begin{align*}
  F &\mapsto \mathbf U_F = \mathcal F \\
  S_c &\mapsto \mathbf U_{S_c} = \left( f(\cdot) \mapsto f(\cdot)\exp(\pi i c \cdot^2 (L+1)/L) \right) \\
  D_a &\mapsto \mathbf U_{D_a} = \left( f(\cdot) \mapsto f(a^{-1}\cdot) \right).
 \end{align*}
 With these transformations the following hold for all $\lambda \in \ZZ_L^2$
 \begin{align*}
  \mathbf U_F \pi(\lambda) &= \phi_F(\lambda)\pi(F\lambda) \mathbf U_F \\
  \mathbf U_{S_c} \pi(\lambda) &= \phi_{S_c}(\lambda)\pi(S_c \lambda) \mathbf U_{S_c} \\
  \mathbf U_{D_a} \pi(\lambda) &= \phi_{D_a}(\lambda)\pi(D_a\lambda) \mathbf U_{D_a},
 \end{align*}
 where $\phi_F$, $\phi_{S_c}$ and $\phi_{D_a}$ are phase factors.
\begin{proof}
  Some simple calculations are sufficient to establish the result:
  \begin{align*}
    \mathbf U_F \pi(\lambda)f & = \mathcal F \bd{M}_\omega\bd{T}_xf = T_\omega M_{-x} \hat f \\
		       & = e^{-2\pi ix\omega/L}\bd{M}_{-x}\bd{T}_\omega \hat f = e^{-2\pi ix\omega/L}\pi(F\lambda)\mathbf U_Ff,
  \end{align*}
  \begin{align*}
    \mathbf U_{S_c} \pi(\lambda)f & = \bd{M}_\omega\bd{T}_x e^{\pi ic(\cdot + x)^2(L+1)/L}f\\
		       & = e^{\pi icx^2(L+1)/L}\bd{M}_{\omega+cx}\bd{T}_x\mathbf U_{S_c}f \\
		       & = e^{\pi icx^2(L+1)/L}\pi(S_c\lambda)\mathbf U_{S_c}f
  \end{align*}
  and
  \begin{align*}
    \mathbf U_{D_a} \pi(\lambda)f & = \bd{M}_{a^{-1}\omega}f(a\cdot - x) = \bd{M}_{a^{-1}\omega}f\left(a^{-1}(\cdot - ax)\right)\\
		       & = \bd{M}_{a^{-1}\omega}\bd{T}_{ax} \mathbf U_{D_a}f = \pi(D_a\lambda)\mathbf U_{D_a}f.
  \end{align*} 
\end{proof}
\end{lemma}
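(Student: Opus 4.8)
The plan is to verify each of the three intertwining relations by direct computation, applying the metaplectic operator to an arbitrary time-frequency shift $\pi(\lambda)f = \mathbf M_\omega \mathbf T_x f$ with $\lambda = (x,\omega)^T$ and rearranging the result into the form $(\text{phase})\cdot\pi(M\lambda)\,\mathbf U_M$. In every case the required phase factor emerges from reordering elementary translations and modulations past one another, so the whole lemma reduces to bookkeeping with the canonical commutation relation between $\mathbf T$ and $\mathbf M$.

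For $\mathbf U_F = \mathcal F$ I would start from the two defining Fourier rules $\mathcal F\mathbf T_x = \mathbf M_{-x}\mathcal F$ and $\mathcal F\mathbf M_\omega = \mathbf T_\omega \mathcal F$. Applying them in turn to $\mathcal F\mathbf M_\omega \mathbf T_x f$ produces $\mathbf T_\omega \mathbf M_{-x}\mathcal F f$; since $\mathbf T_\omega$ and $\mathbf M_{-x}$ do not commute, swapping them into the canonical modulation-then-translation order $\pi(\cdot)=\mathbf M\mathbf T$ contributes the scalar $e^{-2\pi i x\omega/L}$, giving $\phi_F(\lambda)=e^{-2\pi i x\omega/L}$ together with the transformed shift $\pi(F\lambda)$. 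For the dilation $\mathbf U_{D_a}\colon f(\cdot)\mapsto f(a^{-1}\cdot)$ I would rewrite the argument $a^{-1}(\cdot)-x = a^{-1}(\cdot - ax)$, which turns the translation by $x$ into translation by $ax$, while the modulation frequency rescales to $a^{-1}\omega$ because invertibility of $a$ in $\ZZ_L$ lets $a^{-1}$ act consistently on the frequency index; this matches $D_a\lambda$ and leaves the trivial phase $\phi_{D_a}\equiv 1$.

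The genuinely delicate case is the chirp $\mathbf U_{S_c}$, and I expect the quadratic expansion to be the main obstacle. Moving the multiplication operator through $\mathbf M_\omega\mathbf T_x$ rewrites $\mathbf U_{S_c}\pi(\lambda)$ as $\mathbf M_\omega\mathbf T_x$ composed with multiplication by $e^{\pi i c(\cdot+x)^2(L+1)/L}$, and completing the square via $(\cdot+x)^2 = \cdot^2 + 2x\cdot + x^2$ splits the exponent into three pieces: the original chirp (reconstituting $\mathbf U_{S_c}$ on the right), a linear term $2x\cdot$ that becomes an extra modulation shifting $\omega$ to $\omega+cx$ consistently with $S_c\lambda$, and a constant term yielding the scalar phase $\phi_{S_c}(\lambda)=e^{\pi i c x^2(L+1)/L}$.

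The care needed in this last case lies in the factor $(L+1)/L$. One must check that the linear exponent $2cx\cdot(L+1)/L$ really does act as a modulation with a well-defined index in $\ZZ_L$: since $cx(L+1)\equiv cx \pmod L$, the induced frequency shift is exactly $cx$, which is what makes $\pi(S_c\lambda)$ appear rather than some ill-defined fractional shift. Verifying this reduction, and that the residual constant phase is consistent modulo the period, is precisely the point where the finite discrete setting departs from the continuous one and where the number-theoretic normalization of the discrete chirp becomes essential.
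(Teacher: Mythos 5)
Your proposal is correct and takes essentially the same route as the paper's proof: a direct computation of each intertwining relation, using the Fourier commutation rules plus the $\mathbf{T}$--$\mathbf{M}$ commutation phase for $\mathbf{U}_F$, completion of the square $(\cdot+x)^2=\cdot^2+2x\cdot+x^2$ for the chirp $\mathbf{U}_{S_c}$, and the rewriting $a^{-1}\cdot - x = a^{-1}(\cdot-ax)$ for $\mathbf{U}_{D_a}$. Your extra check that $cx(L+1)\equiv cx \pmod{L}$, so the linear term is a genuine modulation by $cx$, is precisely the identity $\exp(2\pi i m(L+1)/L)=\exp(2\pi i m/L)$ that the paper uses implicitly here (and states explicitly only later, in the proof of Proposition \ref{pro:sheartrans}).
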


The combination of the two results above immediately yields the following theorem.

\begin{theorem}\label{thm:meta_shifts}
 For any matrix $M\in \ZZ_L^2$ with $\det (M)=1$, there exists a metaplectic operator $\mathbf U_M$, such that for all $\lambda \in \ZZ_L^2$
 \begin{equation*}
  \mathbf U_M \pi(\lambda) = \phi_M(\lambda)\pi(M\lambda) \mathbf U_M.
 \end{equation*}
\end{theorem}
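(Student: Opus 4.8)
The plan is to combine Proposition~\ref{pro:weyldecompose} with Lemma~\ref{lem:meta_phases} by exploiting the multiplicative structure of both the symplectic matrices and their metaplectic counterparts. Proposition~\ref{pro:weyldecompose} tells us that any $M\in\ZZ_L^{2\times 2}$ with $\det(M)=1$ factors as a product of the elementary matrices $F$, $S_c$ and $D_a$ (possibly with inverses), and Lemma~\ref{lem:meta_phases} supplies, for each such elementary factor, a metaplectic operator satisfying the desired intertwining relation up to a phase. So the first step is to \emph{define} $\mathbf U_M$ as the corresponding product of elementary metaplectic operators, i.e. if $M = G_1 G_2 \cdots G_r$ with each $G_j\in\{F,S_c,D_a\}$ (or inverses), then set $\mathbf U_M = \mathbf U_{G_1}\mathbf U_{G_2}\cdots\mathbf U_{G_r}$.

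Second, I would prove the intertwining relation by induction on the number of factors $r$. The base case $r=1$ is exactly Lemma~\ref{lem:meta_phases}. For the inductive step, write $M = G_1 M'$ where $M' = G_2\cdots G_r$, and compute
\begin{equation*}
  \mathbf U_M \pi(\lambda) = \mathbf U_{G_1}\mathbf U_{M'}\pi(\lambda)
    = \mathbf U_{G_1}\,\phi_{M'}(\lambda)\,\pi(M'\lambda)\,\mathbf U_{M'},
\end{equation*}
using the inductive hypothesis on $M'$. Applying Lemma~\ref{lem:meta_phases} (or its inductive instance) to push $\mathbf U_{G_1}$ past $\pi(M'\lambda)$ gives
\begin{equation*}
  \mathbf U_M \pi(\lambda)
    = \phi_{M'}(\lambda)\,\phi_{G_1}(M'\lambda)\,\pi(G_1 M'\lambda)\,\mathbf U_{G_1}\mathbf U_{M'}
    = \phi_M(\lambda)\,\pi(M\lambda)\,\mathbf U_M,
\end{equation*}
where the composite phase is $\phi_M(\lambda) = \phi_{M'}(\lambda)\,\phi_{G_1}(M'\lambda)$. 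Since each individual phase is a unimodular scalar, the product is again a scalar phase factor, so the relation is preserved and the induction closes.

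The one subtlety I would want to address is \emph{well-definedness}: the factorization of $M$ into elementary matrices is not unique, so a priori the operator $\mathbf U_M$ depends on the chosen factorization. For the statement as phrased, however, this is not actually an obstacle, since the theorem only asserts the \emph{existence} of some metaplectic operator with the stated property; the explicit factorization in Proposition~\ref{pro:weyldecompose} furnishes one concrete choice. I would therefore simply fix that factorization rather than attempt to show independence of the representation. The only genuine calculation hidden in the argument is verifying that the elementary phase factors are well-behaved under composition, but since unimodularity is automatic this reduces to the trivial observation that a product of phases is a phase; the main conceptual content is entirely carried by the two preceding results, so I expect no serious difficulty beyond bookkeeping the order of the factors correctly.
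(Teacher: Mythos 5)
Your proposal is correct and takes essentially the same route as the paper: the paper's entire proof is the remark that the theorem follows immediately by combining Proposition \ref{pro:weyldecompose} (the factorization of any determinant-one matrix into the elementary matrices $F$, $S_c$, $D_a$) with Lemma \ref{lem:meta_phases} (the intertwining relations for the corresponding elementary metaplectic operators), and your induction with the composed phase $\phi_M(\lambda)=\phi_{M'}(\lambda)\,\phi_{G_1}(M'\lambda)$ simply makes that one-line argument explicit. Your observation that only existence is asserted, so one may fix the concrete factorization from Proposition \ref{pro:weyldecompose}, matches the paper's implicit stance.
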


\section{Computation on nonseparable lattices}\label{sec:nonseplatts}

Nonseparable lattices in $\ZZ_L^2$ can be interpreted in a variety of
ways. Several different approaches relate Gabor expansions on general
lattices to one or several equivalent expansions on separable (or
rectangular) lattices. From an algorithmic viewpoint, these are of
particular interest, since a wealth of research
\citep{po76,allen1977unified,auslander1991discrete,zezi93,bage96,st98-8}
has investigated efficient algorithms for analysis and synthesis using
Gabor dictionaries on separable lattices. Each of the three approaches
described in this section yields a simple relation between arbitrary
given Gabor systems and Gabor systems on separable sampling sets that
can be harnessed for efficient analysis and synthesis.
  
  \subsection{Correspondence via multiwindow Gabor}\label{ssec:multiwin}
  
   We will decompose a given lattice into a union of co-sets
  of a sparser separable lattice, which will allow us to use
  multiwindow methods \citep{zezi96, zezi97, zezi97-1, zezi98} for the computation. 
  Using multiwindow methods for computation of Gabor transforms on nonseparable lattices 
  has been proposed in \citep{fekoprst96, zezi97} and implementation has been discussed in 
  \citep{bava04,va01-2}. However, the latter only briefly mention the 
  computation of dual Gabor windows, not discussing efficient implementation in detail.
  
  \begin{proposition} \label{pr:mwindec}
   Given the lattice $\Lambda$ in normal form specified by the parameters $a,b$ and $s$, then
   \begin{equation*}
    \Lambda = \cup_{m=0}^{\lambda_2-1} \left((am,sm \mod b)^T + \tilde \Lambda\right),
   \end{equation*}
   where $\lambda_2=b/\gcd(b,s)$ and $\tilde{\Lambda}$ is the separable lattice generated by $(\lambda_2 a,0)^T$ and $(0,b)$.
  \end{proposition}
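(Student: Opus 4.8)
The plan is to establish the set equality by proving the two inclusions directly from the explicit integer parametrisations of both lattices, so that no cardinality count is strictly needed. Every element of $\Lambda = \left(\begin{smallmatrix} a & 0 \\ s & b\end{smallmatrix}\right)\ZZ_L^2$ has the form $n_1(a,s)^T + n_2(0,b)^T = (n_1 a,\, n_1 s + n_2 b)^T$ with $n_1,n_2\in\ZZ_L$, while $\tilde\Lambda$ consists exactly of the integer combinations $k_1(\lambda_2 a,0)^T + k_2(0,b)^T$. Throughout I would lean on the single algebraic identity $\lambda_2 s = \lambda_1 b$, immediate from $s=\lambda_1\gcd(b,s)$ and $b=\lambda_2\gcd(b,s)$; this is precisely what forces shifted copies of the oblique column $(a,s)^T$ to realign with the coarse separable lattice, and it is the mechanism driving the whole argument.

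For the inclusion ``$\supseteq$'' I would first check $\tilde\Lambda\le\Lambda$ by exhibiting its generators inside $\Lambda$: the point $(0,b)^T$ is the second column of the generator $A$, and $(\lambda_2 a,0)^T = \lambda_2(a,s)^T - \lambda_1(0,b)^T$ lies in $\Lambda$ exactly because $\lambda_2 s = \lambda_1 b$. Each coset representative $(am,\, sm \bmod b)^T$ is itself in $\Lambda$, since it equals $m(a,s)^T$ corrected by an integer multiple of $(0,b)^T$ (the reduction of the second coordinate modulo $b$). Hence the entire union sits inside $\Lambda$.

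The substantive direction is ``$\subseteq$''. Given an arbitrary $(n_1 a,\, n_1 s + n_2 b)^T\in\Lambda$, I would set $m := n_1 \bmod \lambda_2$, so that $n_1 - m = \lambda_2 k$ for some integer $k$, and then verify that the difference from the $m$-th representative lands in $\tilde\Lambda$. The first coordinate difference is $a(n_1-m) = k(\lambda_2 a)$, a multiple of the first generator. For the second coordinate, writing $sm \bmod b = sm - qb$ with $q=\lfloor sm/b\rfloor$, the difference is $(n_1-m)s + (n_2+q)b = k\lambda_2 s + (n_2+q)b = (k\lambda_1 + n_2 + q)b$, where the identity $\lambda_2 s=\lambda_1 b$ collapses the first term into a multiple of $b$. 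Thus the difference is the explicit element $k(\lambda_2 a,0)^T + (k\lambda_1+n_2+q)(0,b)^T$ of $\tilde\Lambda$, placing the original point in the $m$-th coset.

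The main obstacle is bookkeeping the reductions modulo $L$ and modulo $b$ consistently; the direct double inclusion above is attractive precisely because it treats the coefficients as integer representatives and never needs the divisibility $\lambda_2 a\mid L$. If one additionally wants to certify that the $\lambda_2$ cosets are genuinely disjoint, so that the union is a true coset decomposition with $|\tilde\Lambda| = L^2/(\lambda_2 ab)$ and $|\Lambda| = L^2/(ab)$ matching, then $\lambda_2 a\mid L$ must be shown. I would derive it from the normal-form constraint which, as in the proof of Proposition~\ref{pro:lattNF}, amounts to $ab\mid sL$: substituting $s=\lambda_1\gcd(b,s)$ and $b=\lambda_2\gcd(b,s)$ reduces this to $a\lambda_2\mid\lambda_1 L$, and a prime-by-prime valuation comparison using $\gcd(\lambda_1,\lambda_2)=1$ together with $a\mid L$ then yields $\lambda_2 a\mid L$.
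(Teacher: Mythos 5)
Your proof is correct, but it follows a genuinely different route than the paper's. The paper argues column-by-column: it introduces the sets $M_x=\{sx+b\omega:\omega\in\ZZ_L\}$ of second coordinates lying above the time position $ax$, shows that $0\in M_x$ exactly when $\lambda_2\mid x$ (so $M_{x+\lambda_2}=M_x$ and $M_x=(sx\bmod b)+M_0$), and then regroups the $L/a$ columns of $\Lambda$ by the residue of $x$ modulo $\lambda_2$ to obtain the coset decomposition. That argument is structural and geometric -- it exhibits the lattice as $\lambda_2$-periodically repeating shifted columns, which is precisely the picture behind the multiwindow interpretation -- but it implicitly assumes $\lambda_2\mid L/a$ when it regroups the columns into $\lambda_2$ classes of $L/(a\lambda_2)$ columns each, a divisibility it never verifies. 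Your double-inclusion argument instead works at the level of integer generators, with the single identity $\lambda_2 s=\lambda_1 b$ doing all the work: it produces $(\lambda_2 a,0)^T=\lambda_2(a,s)^T-\lambda_1(0,b)^T\in\Lambda$ for the inclusion ``$\supseteq$'', and collapses $k\lambda_2 s$ into $k\lambda_1 b$ for the inclusion ``$\subseteq$''. This buys two things: the set equality is established without ever invoking cardinality or the divisibility $\lambda_2 a\mid L$, and your closing derivation of $\lambda_2 a\mid L$ from the normal-form constraint $ab\mid sL$ of Proposition~\ref{pro:lattNF} (via $\gcd(\lambda_1,\lambda_2)=1$ and a prime-by-prime comparison) supplies exactly the fact the paper's regrouping step takes for granted, and in addition certifies that the $\lambda_2$ cosets are pairwise disjoint. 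What you lose relative to the paper is the explicit column picture, which is the more suggestive formulation when one goes on to build the multiwindow Gabor systems of Subsection~\ref{ssec:multiwin}.
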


  \begin{proof}
    Let the matrix generating $\Lambda$ be denoted by $A$ and let us define $M_x = \left\{ sx+b\omega:\; \omega \in \ZZ_L \right\}$, for $0 \leq x < L/a$. We note here, that $M_x$ is the second coordinate of the set $ A\cdot (x,\ZZ_L)^T$.
    Furthermore, $0 \in M_x$ if and only if $x$ is a multiple of $\lambda_2$.
    To see that, we first note that $\lambda_1$ and $\lambda_2$ are relatively prime.
    Then the following equation has a solution if and only if $x$ is a multiple of $\lambda_2$
    \begin{equation*}
      sx+b\omega = b \left( \frac{\lambda_1}{\lambda_2} x + \omega \right)=0.
    \end{equation*}
    This yields
   \begin{align*}
    M_x &= M_{x+\lambda_2}, \quad \text{for $x \in \ZZ_{L/a}$}\\
    M_x &= sx\mod b + M_0 
   \end{align*}
   This observation yields the following decomposition of the original lattice
   \begin{equation*}
    \begin{split}
     \Lambda &= \bigcup_{x=0}^{L/a-1} \{ ax \} \times M_x \\
	  &= \bigcup_{m=0}^{\lambda_2-1} \bigg((am,sm \mod b)^T \\ & \hspace{40pt} 
	  + \bigcup_{j=0}^{L/(a\lambda_2)-1} \{ aj\lambda_2 \} \times M_0 \bigg),
    \end{split}
   \end{equation*}
   which finishes the proof by observing
   \begin{equation*}
    \tilde \Lambda = \bigcup_{j=0}^{L/(a\lambda_2)-1} \{ aj \lambda_2 \} \times M_0.
   \end{equation*}
  \end{proof}

    We can now describe a Gabor system $\mathcal{G}(g,\Lambda)$, with $\Lambda$ in the form \eqref{eq:lattNF}, and the related operators completely in terms of a union of Gabor systems $\mathcal{G}(g_m,\tilde{\Lambda})$ on the separable lattice $\tilde{\Lambda}$.
    
    \begin{proposition}
      Let $\mathcal{G}(g,\Lambda)$, $\mathcal{G}(g_m,\tilde{\Lambda})$, with $\Lambda, \tilde{\Lambda}$ as in Proposition \ref{pr:mwindec} and $g\in\CC^L$, $g_m = \bd{M}_{ms \mod b}\bd{T}_{ma}g$, for $0\leq m < \lambda_2$, be Gabor systems,   then 
      \begin{equation}\label{eq:mw_frop}
        \bd{S}_{g,\Lambda}f  = \sum_{m=0}^{\lambda_2-1}\bd{S}_{g_m,\tilde{\Lambda}}f.
      \end{equation}
      Moreover, the Gabor transform can be computed using the identity
      \begin{equation}\label{eq:mw_phases}
      \begin{split}
        \lefteqn{\langle f, \bd{M}_{kb+(ms\mod b)} \bd{T}_{na}g \rangle} \\
        & = e^{-2\pi i\tilde{n}\tilde{a}(ms \mod b)/L}\langle f, \bd{M}_{kb}\bd{T}_{\tilde{n}\tilde{a}}g_m \rangle,
      \end{split}
      \end{equation}
      where $\tilde{n} = \lfloor n/\lambda_2 \rfloor$ and $m = n-\tilde{n}$.
      \begin{proof}
        Analogous to Lemma \ref{lem:meta_phases}, we find that
        \begin{align*}
          \lefteqn{\bd{M}_{kb+(ms\mod b)} \bd{T}_{na}g} \\
                                               & = \bd{M}_{kb}\bd{M}_{ms\mod b} \bd{T}_{\tilde{n}\tilde{a}}\bd{T}_{ma}g \\
					       & = e^{2\pi i\tilde{n}\tilde{a}(ms \mod b)/L}\bd{M}_{kb}\bd{T}_{\tilde{n}\tilde{a}}\bd{M}_{ms\mod b}\bd{T}_{ma}g \\
					       & = e^{2\pi i\tilde{n}\tilde{a}(ms \mod b)/L}\bd{M}_{kb}\bd{T}_{\tilde{n}\tilde{a}}g_m,
        \end{align*}
        yielding \eqref{eq:mw_phases}. Using $kb+(ms\mod b) = kb+(ns\mod b) = (k-\lfloor ns/b \rfloor)b+ns$, since $\tilde{n}s\mod b = 0$ allows to derive \eqref{eq:mw_frop} by the identity
        \begin{equation*}
         \begin{split}
         \lefteqn{\sum_{n=0}^{L/a-1}\sum_{k=0}^{L/b-1} \langle f,\bd{M}_{ns+kb} \bd{T}_{na}g \rangle \bd{M}_{ns+kb} \bd{T}_{na}g}\\
         & = \sum_{m=0}^{\lambda_2-1} \sum_{\tilde{n}=0}^{L/\tilde{a}-1}\sum_{k=0}^{L/b-1} \langle f,\bd{M}_{kb}\bd{T}_{\tilde{n}\tilde{a}}g_m \rangle \bd{M}_{kb}\bd{T}_{\tilde{n}\tilde{a}}g_m.
         \end{split}
        \end{equation*} 
      \end{proof}
    \end{proposition}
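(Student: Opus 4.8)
The plan is to reduce everything to the canonical commutation relation between time and frequency shifts, $\bd{M}_\omega \bd{T}_x = e^{2\pi i x\omega/L}\bd{T}_x\bd{M}_\omega$, which follows from a one-line computation exactly as in the proof of Lemma \ref{lem:meta_phases}. Combined with the coset decomposition of Proposition \ref{pr:mwindec}, this delivers both claimed identities. I would establish the phase identity \eqref{eq:mw_phases} first, since the frame operator identity \eqref{eq:mw_frop} is essentially a consequence of it.

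To prove \eqref{eq:mw_phases}, write $n = \tilde{n}\lambda_2 + m$ with $m = n\mod \lambda_2$ and $\tilde{n}=\lfloor n/\lambda_2\rfloor$, so that $na = \tilde{n}\tilde{a}+ma$ for $\tilde{a}=\lambda_2 a$. Then factor $\bd{M}_{kb+(ms\mod b)}\bd{T}_{na}g = \bd{M}_{kb}\bd{M}_{ms\mod b}\bd{T}_{\tilde{n}\tilde{a}}\bd{T}_{ma}g$ and push $\bd{M}_{ms\mod b}$ past $\bd{T}_{\tilde{n}\tilde{a}}$ using the commutation relation, which produces the factor $e^{2\pi i\tilde{n}\tilde{a}(ms\mod b)/L}$. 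Recognizing $\bd{M}_{ms\mod b}\bd{T}_{ma}g = g_m$ gives the displayed identity for the analysis atoms, and taking the inner product with $f$ conjugates the phase, yielding \eqref{eq:mw_phases}.

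For \eqref{eq:mw_frop} I would expand $\bd{S}_{g,\Lambda}f$ as the double sum $\sum_{n,k}\langle f,\bd{M}_{ns+kb}\bd{T}_{na}g\rangle\bd{M}_{ns+kb}\bd{T}_{na}g$ over the index set $\ZZ_{L/a}\times\ZZ_{L/b}$ parametrizing $\Lambda$. Two number-theoretic facts, both stemming from $\gcd(\lambda_1,\lambda_2)=1$ and $\lambda_2 s = \lambda_1 b$, are then used: $ns\mod b = ms\mod b$ and $\tilde{n}s\mod b = 0$, so that the frequency index rewrites as $kb+(ms\mod b) = (k-\lfloor ns/b\rfloor)b+ns$, i.e. a pure relabeling of the summation variable $k$. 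The key structural observation is that the frame operator is \emph{insensitive to unimodular phases}: if an atom satisfies $\psi = \phi\,\psi'$ with $|\phi|=1$, then $\langle f,\psi\rangle\psi = |\phi|^2\langle f,\psi'\rangle\psi' = \langle f,\psi'\rangle\psi'$, so the phase factor from \eqref{eq:mw_phases} disappears entirely. Grouping the double sum by the residue $m$ of $n$ modulo $\lambda_2$ identifies the $m$-th group with $\bd{S}_{g_m,\tilde{\Lambda}}f$, and summing over $m$ yields \eqref{eq:mw_frop}.

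The main obstacle, or at least the step requiring the most care, is the bookkeeping of this re-indexing: I must check that as $(n,k)$ ranges once over $\ZZ_{L/a}\times\ZZ_{L/b}$, the triples $(m,\tilde{n},k')$ with $k'=k-\lfloor ns/b\rfloor$ range exactly once over $\{0,\dots,\lambda_2-1\}$ together with the index set $\ZZ_{L/\tilde{a}}\times\ZZ_{L/b}$ of $\tilde{\Lambda}$. This is precisely the content of the coset decomposition in Proposition \ref{pr:mwindec}, and its validity hinges on the coprimality of $\lambda_1$ and $\lambda_2$; once this correspondence is confirmed, both identities follow without further computation.
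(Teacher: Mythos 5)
Your proposal is correct and follows essentially the same route as the paper: the same factorization and commutation argument gives \eqref{eq:mw_phases}, and the same re-indexing $kb+(ms\mod b)=(k-\lfloor ns/b\rfloor)b+ns$ together with grouping $n$ by its residue modulo $\lambda_2$ gives \eqref{eq:mw_frop}. The only differences are cosmetic: you spell out the phase-cancellation $|\phi|^2=1$ in the frame operator sum, which the paper uses implicitly, and you write $m = n \mod \lambda_2$ (i.e.\ $n = \tilde{n}\lambda_2 + m$), which is the intended reading of the paper's index relation.
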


  \subsection{Correspondence via Smith normal form}\label{ssec:snf}
    In this and the following section, we aim to describe an arbitrary lattice as separable lattice under a symplectic deformation, i.e. we will determine a symplectic matrix $P$, such that $\Lambda = P\tilde{\Lambda}$ for a general lattice $\Lambda$ and a separable lattice $\tilde{\Lambda}$. This problem is equivalent to decomposing the lattice generator matrix $A\in\ZZ_L^{2\times 2}$ into $A = PDV$, with a diagonal matrix $D$, a determinant $1$ matrix $V$ and a symplectic matrix $P$. We observed earlier that any determinant $1$ matrix in $\ZZ_L^{2\times 2}$ is symplectic. Thus, this decomposition is accomplished by applying Smith's algorithm for matrices in $\ZZ^{2\times 2}$ to determine the Smith normal form $\tilde{D}$ of $A$ and transformation matrices $\tilde{P},\tilde{V}$, followed by considering the entries of $\tilde{D},\tilde{P},\tilde{V}$ modulo $L$ to find $D,P,V$.
    
    The following Proposition by Feichtinger et al. was originally published in \citep{fehakamane08}, where the proof is also presented. The procedure of computing Gabor transforms and dual windows using the methods in this section have been proposed therein, but their implementation was not discussed in detail. 
    
    \begin{proposition}
     Let $\Lambda = A\ZZ_L^2$ be a lattice and $A=\tilde{P}\tilde{D}\tilde{V}$ the Smith decomposition of $A$. Then
     \begin{equation*}
      \Lambda = P \tilde \Lambda,
     \end{equation*} 
     where $P = (\tilde P \mod L)$, $D = (\tilde D \mod L)$ and $\tilde \Lambda = D \ZZ_L^2$.
    \end{proposition}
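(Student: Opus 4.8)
The plan is to descend the integer Smith factorization $A = \tilde{P}\tilde{D}\tilde{V}$ to the ring $\ZZ_L$ and then discard the unimodular factor $\tilde{V}$ on the right, since its reduction acts trivially on $\ZZ_L^2$. First I would recall that reduction modulo $L$ is a ring homomorphism $\ZZ\to\ZZ_L$, which extends entrywise to a homomorphism $\ZZ^{2\times 2}\to\ZZ_L^{2\times 2}$ respecting matrix products. Applying it to the integer identity $A=\tilde{P}\tilde{D}\tilde{V}$ (here $A$ denoting the fixed integer lift of the given generator to which Smith's algorithm is applied) yields
\begin{equation*}
  A \equiv PDV \pmod{L}, \qquad V := (\tilde{V}\mod L),
\end{equation*}
so that $\Lambda = A\ZZ_L^2 = PDV\ZZ_L^2$.

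The crux of the argument is to show $V\ZZ_L^2 = \ZZ_L^2$. Here I would use that $\tilde{V}$ is unimodular over $\ZZ$, i.e.\ $\det\tilde{V}=\pm 1$. Since the determinant commutes with reduction, $\det V = \pm 1$ in $\ZZ_L$, which is a unit; hence $V$ is invertible in $\ZZ_L^{2\times 2}$. Exactly as in the proof of Proposition \ref{pro:equilatt}, multiplication by an invertible matrix maps $\ZZ_L^2$ bijectively onto itself, so $V\ZZ_L^2 = \ZZ_L^2$. (The outer factor $P=(\tilde{P}\mod L)$ is invertible for the same reason, which is what later permits associating a metaplectic operator to it, though this is not needed for the lattice identity itself.) Chaining the observations gives
\begin{equation*}
  \Lambda = PDV\ZZ_L^2 = PD\bigl(V\ZZ_L^2\bigr) = PD\ZZ_L^2 = P\bigl(D\ZZ_L^2\bigr) = P\tilde{\Lambda},
\end{equation*}
which is the assertion.

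I expect the only genuine subtlety, and the point where the finite setting differs from the classical one over $\ZZ$, to be the passage from ``$\tilde{V}$ unimodular over $\ZZ$'' to ``$V$ invertible over $\ZZ_L$''. This is exactly where it matters that the outer factors of the Smith decomposition have determinant a true unit $\pm 1$ rather than merely a nonzero integer: a generic integer matrix with nonzero determinant need not reduce to an invertible matrix modulo $L$, so the unimodularity built into the Smith normal form is precisely what rescues the factorization after reduction. Everything else is the routine verification that reduction modulo $L$ is multiplicative, which I would state but not belabor.
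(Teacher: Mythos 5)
Your proof is correct. There is, however, no in-paper proof to compare it against: the paper states this proposition as quoted from Feichtinger et al.~\citep{fehakamane08} and explicitly defers the proof to that reference. Your argument is exactly the one implicit in the paper's setup — the text preceding the proposition describes computing the Smith normal form over $\ZZ$ and reducing the factors modulo $L$, and the key step you isolate, namely that $\det V$ being a unit in $\ZZ_L$ forces $V\ZZ_L^2 = \ZZ_L^2$, is the same fact invoked at the end of the proof of Proposition~\ref{pro:equilatt} (``$Q\cdot \ZZ_L^2 = \ZZ_L^2$ for any invertible matrix''). Your closing remark also pinpoints the right subtlety: it is precisely the unimodularity ($\det = \pm 1$) of the outer Smith factors, rather than mere nonvanishing of the determinant, that guarantees invertibility after reduction modulo $L$, and the same observation is what later allows a metaplectic operator to be attached to $P$.
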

    
    Using Proposition \ref{pro:weyldecompose} and Lemma \ref{lem:meta_phases} one obtains the operator $\mathbf U_P$ corresponding to the symplectic matrix $P$ and this leads to the final computational procedure described in the following Corollary.
    
    \begin{corollary}\label{pro:metaform}
     Let the notation be as in the previous proposition. Then one finds for the symplectic matrix $P$ and the corresponding metaplectic operator $\mathbf U_P$ by setting $\tilde{g} = \bd{U}_{P}^{-1}g$
     \begin{equation*}
      \bd{S}_{g,\Lambda} = \bd{U}_P\bd{S}_{\tilde g, \tilde{\Lambda}}\bd{U}_P^{-1}.
     \end{equation*} 
     Furthermore, the Gabor coefficients can be computed using the identity
     \begin{equation*}
      \langle f, \pi\left(z\right)g \rangle = \phi_P(z)\langle \bd{U}_P^{-1}f, \pi\left(P^{-1}z\right)\tilde{g} \rangle,
     \end{equation*} 
      for all $z = (x,\omega)^T \in\Lambda$.
    \end{corollary}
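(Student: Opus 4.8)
The plan is to establish both identities directly from Theorem~\ref{thm:meta_shifts} applied to the symplectic matrix $P$, together with the definition $\tilde g = \bd{U}_P^{-1}g$. First I would verify the coefficient identity, since the frame operator identity will follow from it almost mechanically. Starting from Theorem~\ref{thm:meta_shifts}, for any $\lambda\in\ZZ_L^2$ we have $\bd{U}_P \pi(\lambda) = \phi_P(\lambda)\pi(P\lambda)\bd{U}_P$; rewriting with $\lambda = P^{-1}z$ gives $\pi(z)\bd{U}_P = \phi_P(P^{-1}z)^{-1}\bd{U}_P\pi(P^{-1}z)$, or equivalently a relation expressing $\bd{U}_P^{-1}\pi(z)\bd{U}_P$ in terms of $\pi(P^{-1}z)$ up to a phase. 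Then I would compute
\begin{equation*}
  \langle f, \pi(z)g\rangle = \langle f, \pi(z)\bd{U}_P\tilde g\rangle = \langle \bd{U}_P^{-1}f, \bd{U}_P^{-1}\pi(z)\bd{U}_P\,\tilde g\rangle,
\end{equation*}
using that $\bd{U}_P$ is unitary (a property of metaplectic operators that I would invoke, noting that $\mathcal F$ and the modulation/dilation operators of Lemma~\ref{lem:meta_phases} are each unitary up to a computable normalization). Substituting the intertwining relation then yields the phase $\phi_P(z)$ times $\langle \bd{U}_P^{-1}f, \pi(P^{-1}z)\tilde g\rangle$, which is exactly the claimed formula.

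For the frame operator identity, I would start from the definition \eqref{eq:frameop},
\begin{equation*}
  \bd{S}_{g,\Lambda}f = \sum_{z\in\Lambda} \langle f, \pi(z)g\rangle \pi(z)g,
\end{equation*}
and substitute $g = \bd{U}_P\tilde g$ in both occurrences. The inner product becomes $\phi_P(z)\langle \bd{U}_P^{-1}f, \pi(P^{-1}z)\tilde g\rangle$ by the coefficient identity just proved, while $\pi(z)g = \pi(z)\bd{U}_P\tilde g = \overline{\phi_P(z)}\,\bd{U}_P\pi(P^{-1}z)\tilde g$ by the same intertwining relation. The two phase factors multiply to $1$ because $\phi_P$ is a unimodular phase, so they cancel. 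Reindexing the sum by $w = P^{-1}z$, which is a bijection from $\Lambda$ onto $\tilde\Lambda = P^{-1}\Lambda$, collapses the expression to $\bd{U}_P\bigl(\sum_{w\in\tilde\Lambda}\langle \bd{U}_P^{-1}f, \pi(w)\tilde g\rangle \pi(w)\tilde g\bigr) = \bd{U}_P\bd{S}_{\tilde g,\tilde\Lambda}\bd{U}_P^{-1}f$.

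The main obstacle I anticipate is bookkeeping the phase factor $\phi_P$ carefully enough to confirm the cancellation. Lemma~\ref{lem:meta_phases} only asserts that $\phi_F,\phi_{S_c},\phi_{D_a}$ are ``phase factors'' and the calculations there show them to be unimodular, but for a composite $P = S_{c_0a_0^{-1}}D_{a_0}F^{-1}S_{-a_0^{-1}b}FS_{-m}$ as in Proposition~\ref{pro:weyldecompose}, the composite phase $\phi_P$ is a product of the elementary phases evaluated at successively transformed arguments, and one must check it remains unimodular so that $\phi_P(z)\overline{\phi_P(z)} = |\phi_P(z)|^2 = 1$. I would handle this by an induction on the factorization: if $\bd{U}_{M_1}$ and $\bd{U}_{M_2}$ each intertwine with unimodular phases, then their composition does too, with $\phi_{M_1M_2}(\lambda) = \phi_{M_1}(M_2\lambda)\phi_{M_2}(\lambda)$, a product of unimodular factors. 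The remaining steps are the routine reindexing and unitarity arguments, which present no real difficulty once the phase is controlled.
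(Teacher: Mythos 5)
Your proof is correct and takes essentially the route the paper intends: the corollary is stated as an immediate consequence of Theorem~\ref{thm:meta_shifts} (built from Proposition~\ref{pro:weyldecompose} and Lemma~\ref{lem:meta_phases}), and your argument fills in exactly the routine details left implicit there — unitarity of $\bd{U}_P$ (with the $\mathcal{F}$/$\mathcal{F}^{-1}$ normalizations cancelling in the Weyl factorization), unimodularity of the composed phase via $\phi_{M_1M_2}(\lambda) = \phi_{M_1}(M_2\lambda)\phi_{M_2}(\lambda)$, and the reindexing $w = P^{-1}z$ over $\tilde{\Lambda}$. The only caveat is notational: the phase your computation produces is $\phi_P(P^{-1}z)$ in the convention of Theorem~\ref{thm:meta_shifts}, which is what the corollary's unspecified $\phi_P(z)$ implicitly denotes, so this is harmless.
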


  \subsection{Correspondence via shearing}\label{ssec:shears}
  
As detailed in the previous section, the Weil decomposition and Smith normal form can be used to show that any lattice in $\ZZ^2_L$ can be written as a separable lattice, deformed by $6$ elementary symplectic matrices. This number can be reduced to $4$ or less as shown in the following theorem, which we will prove at the end of this section. Reducing computations on nonseparable lattices to the product lattice case via a shear operation has been proposed earlier \citep{bava04, va01-2}, however the authors were able to describe only a subset of all lattices over $\ZZ^2_L$ as shears of rectangular lattices. In \citep{va01-2} the author speculates that it might be possible to describe every lattice a the image of a product lattice under a horizontal and a vertical shear. In this section, we prove that this is indeed possible.

The proper definition of discrete, finite chirps, necessary to perform time-frequency shearing, has been a matter of some discussion, see e.g. \citep{cafi06}. While the naive linear chirp $\exp(2\pi ist^2/L)$ is still used by Bastiaans and van Leest~\citep{bava04, va01-2}, a more appropriate definition, see Lemma \ref{lem:meta_phases}, has been proposed by Kaiblinger \citep{ka99-1,fehakamane08}, constituting a second degree character~\citep{we64}.

  \begin{theorem}\label{thm:shearform}
    let $A\in \ZZ_L^{2\times 2}$. There exist $s_0,s_1\in\ZZ_L$ and $V\in\ZZ_L^{2\times 2}$ with $|\det(V)| = 1$, such that
    \begin{equation}\label{eq:sheardecompose}
      A = U_{s_0,s_1}DV,
    \end{equation}
    where $D\in\ZZ_L^{2\times 2}$ is diagonal and 
    \begin{equation}\label{eq:sheardecompose2}
      U_{s_0,s_1} = S_{-s_1}F^{-1}S_{s_0}F 
    \end{equation}
  \end{theorem}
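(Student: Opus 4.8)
The plan is to reduce the factorization of $A$ to a single statement about the lattice $\Lambda = A\ZZ_L^2$, and then to realize the two shears by solving two successive congruences read off from the normal form. First I would record the elementary computation $U_{s_0,s_1} = S_{-s_1}F^{-1}S_{s_0}F = \left(\begin{smallmatrix} 1 & -s_0 \\ -s_1 & 1+s_0s_1\end{smallmatrix}\right)$, so that $\det U_{s_0,s_1}=1$ and $U_{s_0,s_1}^{-1} = H_{s_0}S_{s_1}$ with $H_{s_0} = \left(\begin{smallmatrix}1 & s_0\\ 0 & 1\end{smallmatrix}\right)$; thus $U_{s_0,s_1}^{-1}$ is a vertical shear followed by a horizontal shear. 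Since right multiplication by a matrix of determinant $\pm 1$ is an integral change of generators of $\Lambda$ and leaves the form of the claim untouched, I may use such column operations to bring $A$ to lower-triangular shape and, comparing with Proposition \ref{pro:lattNF}, assume $\Lambda$ has normal-form parameters $a,b,s$. Everything then rests on producing $s_0,s_1$ for which $U_{s_0,s_1}^{-1}\Lambda$ is separable: the final passage to a genuinely diagonal $D$ is one more determinant-$\pm1$ column reduction, with any leftover unit factor absorbed into $D$.

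Applying $U_{s_0,s_1}^{-1}=H_{s_0}S_{s_1}$ to the generators $(a,s)^T$ and $(0,b)^T$, the vertical shear $S_{s_1}$ replaces the shear parameter $s$ by $s + s_1 a \pmod{b}$ while leaving $a$ and $b$ intact, after which $H_{s_0}$ acts on the upper-triangular form of Proposition \ref{pro:equilatt}. Set $g := \gcd(a,b,s)$. The first key step is to choose $s_1$ so that $\gcd(s+s_1a,\,b)=g$. This is a prime-by-prime statement: for every $p\mid b$ one has $\min(v_p(a),v_p(b),v_p(s)) = v_p(g)$, and the only primes for which $v_p(s+s_1a)$ does not already attain this minimum are those with $v_p(a) < v_p(b)$ and $v_p(s)\ge v_p(a)$, where one need only avoid a single residue of $s_1$ modulo $p$; the Chinese remainder theorem then furnishes a single $s_1$ meeting all these finitely many constraints simultaneously.

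With $s_1$ so fixed, the vertically sheared lattice has the upper-triangular normal form of Proposition \ref{pro:equilatt} with diagonal entries $ab/g$ and $g$ and off-diagonal shear $\tilde s = k_1 a$, where $k_1 s' + k_2 b = g$ is B\'ezout's identity for $s' = s + s_1 a$ and $b$. The horizontal shear $H_{s_0}$ turns $\tilde s$ into $\tilde s + s_0 g \pmod{ab/g}$, so I would pick $s_0$ solving $s_0 g \equiv -k_1 a \pmod{ab/g}$; this congruence is solvable precisely because $g\mid a$, whence $\gcd(g,ab/g)\mid g\mid k_1a$. The result is the separable lattice generated by $D = \mathrm{diag}(ab/g,\,g)$.

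I expect the main obstacle to be the arithmetic ensuring that this $D$ really does generate a separable sublattice of $\ZZ_L^2$, that is, that $ab/g \mid L$; without it the cardinality count $|U_{s_0,s_1}^{-1}\Lambda| = L^2/(ab)$ fails and the construction collapses. This is exactly the point at which the normal-form constraint $s\in\frac{ab}{\gcd(ab,L)}\ZZ$ of Proposition \ref{pro:lattNF} is indispensable. Checking $v_p(ab/g)\le v_p(L)$ prime by prime, the only delicate case is $v_p(s) < \min(v_p(a),v_p(b))$ together with $v_p(a)+v_p(b) > v_p(L)$; there the hypothesis $\frac{ab}{\gcd(ab,L)}\mid s$ forces $v_p(s)\ge v_p(a)+v_p(b)-v_p(L)$, which gives $v_p(ab/g) = v_p(a)+v_p(b)-v_p(s) \le v_p(L)$ as needed. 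Assembling the pieces --- the initial column reduction, the two shears $s_1$ and $s_0$, and a final determinant-$\pm1$ reduction of the separable lattice to the diagonal matrix $D$ (absorbing a possible unit) --- yields the asserted decomposition $A = U_{s_0,s_1}DV$ with $|\det V| = 1$.
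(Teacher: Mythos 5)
Your proof is correct, and its skeleton coincides with the paper's: reduce to the normal form of Proposition \ref{pro:lattNF}, factor $U_{s_0,s_1}^{-1}$ as a horizontal shear times a vertical shear, apply the vertical shear and Proposition \ref{pro:equilatt} to reach upper-triangular form with $X=\gcd(s+s_1a,b)$, and then pick $s_0$ so that the off-diagonal entry $s_0X+ak_1$ vanishes modulo $ab/X$, i.e.\ the paper's condition \eqref{eq:modcond}. Where you genuinely depart from the paper is in how the two shear parameters are produced. The paper chooses $s_1=\prod_j p_j^{\mu_j}$ by an explicit prime-power recipe and checks three valuation cases; you instead name the correct target --- $\gcd(s+s_1a,b)=\gcd(a,b,s)$ --- and obtain $s_1$ by excluding one residue class modulo each bad prime and invoking the Chinese remainder theorem. (The paper's choice in fact achieves the same value of $X$, though it only records $X\mid a$ prime by prime, and $X\mid\gcd(a,b)$ in the subsequent remark.) Similarly, for $s_0$ the paper writes down an explicit formula and verifies an exponent inequality, while you simply cite solvability of the linear congruence $s_0g\equiv -k_1a \pmod{ab/g}$ from $\gcd(g,ab/g)\mid g\mid a$; this is shorter and less error-prone. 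You are also more careful than the paper about the ``without loss of generality'' step: passing from a general $A$ to a normal-form generator involves a change of generators whose determinant is only a unit of $\ZZ_L$, and absorbing that unit into $D$, as you do, is exactly the needed repair. One correction of emphasis: the divisibility $ab/g\mid L$ that you call the main obstacle is not actually a prerequisite for the statement. Your construction is an exact identity over $\ZZ$, subsequently reduced modulo $L$, so nothing can ``collapse''; on the contrary, since $U_{s_0,s_1}$ and $V$ are invertible over $\ZZ_L$, the identity itself forces $|D\ZZ_L^2|=|\Lambda|=L^2/(ab)$, from which $ab/g\mid L$ follows automatically. Your prime-by-prime derivation of it from the normal-form constraint $s\in\frac{ab}{\gcd(ab,L)}\ZZ$ is nevertheless correct and worthwhile, since it is what guarantees that the diagonal entries of $D$ are admissible lattice parameters (so that $b_r\mid L$ and $a_r=ab/b_r\mid L$ in Algorithm \ref{alg:shear}), consistent with the paper's remark that $D$ is the Smith normal form of $A$.
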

  
  We can now rewrite Gabor transforms on nonseparable lattices in the vein of Proposition \ref{pro:metaform} using the metaplectic operator associated to $U_{s_0,s_1}$. Subsequently, we denote by $\bd{U}_{s_0,s_1}$ the metaplectic operator associated with $U_{s_0,s_1}$.
  
\begin{proposition}\label{pro:sheartrans}
  Let $\Lambda = A\ZZ_L^2$ be a lattice, $D,U_{s_0,s_1}$ as in the previous theorem and $\tilde{\Lambda} = D\ZZ_L^2$. Furthermore let $g\in\CC^L$ and $\tilde{g} = \bd{U}_{{s_0,s_1}}^{-1}g$.
  Then
  \begin{align}\label{eq:shearframeop}
    \bd{S}_{g,\Lambda}f & = \bd{U}_{{s_0,s_1}}\bd{S}_{\tilde g,\tilde{\Lambda}}\bd{U}_{{s_0,s_1}}^{-1}f
  \end{align}
  and
  \begin{equation}\label{eq:shearcoeffs}
    \begin{split}\lefteqn{\langle f, \bd{M}_{\omega}\bd{T}_x g \rangle} \\
    & = \phi_{U_{s_0,s_1}}(z)\langle \bd{U}_{{s_0,s_1}}^{-1}f, \bd{M}_{\omega - s1(x-s_0\omega)}\bd{T}_{x-s_0\omega}\tilde{g} \rangle,\end{split}
  \end{equation}
  for all $z = (x,\omega)^T$. Moreover,
  \begin{equation}\label{eq:shearcoeffs-2}
    \phi_{U_{s_0,s_1}}(z) = e^{\pi i (s_0\omega^2-s_1(x-s_0\omega)^2)(L+1)/L}.
  \end{equation}
\end{proposition}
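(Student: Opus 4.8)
The plan is to treat Proposition~\ref{pro:sheartrans} as the shear analogue of Corollary~\ref{pro:metaform}: once the intertwining relation of Theorem~\ref{thm:meta_shifts} is available for the particular matrix $U_{s_0,s_1}$ of \eqref{eq:sheardecompose2}, both \eqref{eq:shearframeop} and \eqref{eq:shearcoeffs} follow by the same bookkeeping, and the only genuinely new labour is pinning down the explicit phase \eqref{eq:shearcoeffs-2}. First I would record two preliminary facts. Since $U_{s_0,s_1} = S_{-s_1}F^{-1}S_{s_0}F$ is a product of the determinant-one elementary matrices of \eqref{eq:elem_sympl}, it satisfies $\det(U_{s_0,s_1}) = 1$, so Theorem~\ref{thm:meta_shifts} supplies a metaplectic operator $\mathbf U_{s_0,s_1}$ with $\mathbf U_{s_0,s_1}\pi(\lambda) = \phi_{U_{s_0,s_1}}(\lambda)\pi(U_{s_0,s_1}\lambda)\mathbf U_{s_0,s_1}$ for all $\lambda$. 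Second, Theorem~\ref{thm:shearform} gives $A = U_{s_0,s_1}DV$ with $|\det V| = 1$, and since an integer matrix of determinant $\pm 1$ is invertible over $\ZZ_L$ we have $V\ZZ_L^2 = \ZZ_L^2$, whence $\Lambda = U_{s_0,s_1}\tilde\Lambda$ with $\tilde\Lambda = D\ZZ_L^2$.

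For \eqref{eq:shearframeop} I would reindex the frame-operator sum \eqref{eq:frameop} over $\mu = U_{s_0,s_1}\nu$, $\nu\in\tilde\Lambda$, and substitute $g = \mathbf U_{s_0,s_1}\tilde g$. Pushing $\pi(\mu)$ through $\mathbf U_{s_0,s_1}$ by the intertwining relation turns each summand $\langle f,\pi(\mu)g \rangle\,\pi(\mu)g$ into $\langle\mathbf U_{s_0,s_1}^{-1}f,\pi(\nu)\tilde g \rangle\,\mathbf U_{s_0,s_1}\pi(\nu)\tilde g$: the two scalar phase factors arising from the analysis coefficient and from the synthesis atom are complex conjugates and therefore cancel, exactly as in the derivation of Corollary~\ref{pro:metaform}. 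Here I use that metaplectic operators are unitary, so $\mathbf U_{s_0,s_1}^{\ast} = \mathbf U_{s_0,s_1}^{-1}$. Factoring $\mathbf U_{s_0,s_1}$ out of the sum on the left and recognising the remaining expression as $\mathbf S_{\tilde g,\tilde\Lambda}(\mathbf U_{s_0,s_1}^{-1}f)$ yields \eqref{eq:shearframeop}.

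Equation \eqref{eq:shearcoeffs} is the single-term version of the same manipulation: writing $\langle f,\pi(z)g \rangle = \langle f,\pi(z)\mathbf U_{s_0,s_1}\tilde g \rangle$, commuting $\pi(z)$ past $\mathbf U_{s_0,s_1}$ and moving $\mathbf U_{s_0,s_1}$ into the first argument by unitarity leaves a single surviving phase $\phi_{U_{s_0,s_1}}(z)$ together with a time-frequency shift obtained by letting the shears act on $z = (x,\omega)^T$. A short matrix computation with \eqref{eq:sheardecompose2} shows that this shift is $\mathbf M_{\omega - s_1(x-s_0\omega)}\mathbf T_{x-s_0\omega}$, i.e. the horizontal shear $F^{-1}S_{s_0}F$ sends $x\mapsto x-s_0\omega$ and leaves $\omega$ fixed, and the vertical shear $S_{-s_1}$ then subtracts $s_1(x-s_0\omega)$ from the frequency, giving \eqref{eq:shearcoeffs}.

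The main obstacle is the explicit phase \eqref{eq:shearcoeffs-2}. For this I would compose the four elementary phase factors of Lemma~\ref{lem:meta_phases} using the cocycle rule $\phi_{M_1M_2}(\lambda) = \phi_{M_2}(\lambda)\,\phi_{M_1}(M_2\lambda)$, which follows immediately from $\mathbf U_{M_1M_2} = \mathbf U_{M_1}\mathbf U_{M_2}$. Two points need care. First, $\phi_{F^{-1}}$ is not listed directly; one obtains $\phi_{F^{-1}}(\lambda) = \phi_F(F^{-1}\lambda)^{-1}$ from $\mathbf U_{F^{-1}} = \mathbf U_F^{-1}$. Second, one must track the four contributions along the chain $S_{-s_1}F^{-1}S_{s_0}F$ and verify that the two Fourier phases $e^{\mp 2\pi i(\cdot)/L}$ cancel, while the two chirp phases $\phi_{S_{s_0}}$ and $\phi_{S_{-s_1}}$, evaluated at the successively sheared coordinates, combine into $e^{\pi i(s_0\omega^2 - s_1(x-s_0\omega)^2)(L+1)/L}$. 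The delicate part is the sign and $(L+1)/L$-normalisation bookkeeping through $F^{-1}$, and making sure the argument of the $S_{-s_1}$-chirp is the already-sheared coordinate $x-s_0\omega$ rather than $x$; everything else is routine.
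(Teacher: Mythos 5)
Your high-level route is the same as the paper's: both \eqref{eq:shearframeop} and \eqref{eq:shearcoeffs} are treated as formal consequences of the intertwining relation for $\bd{U}_{s_0,s_1}$ (Lemma \ref{lem:meta_phases} composed along the factorization of Theorem \ref{thm:shearform}, plus unitarity and $V\ZZ_L^2 = \ZZ_L^2$), and the explicit phase \eqref{eq:shearcoeffs-2} is isolated as the only real work. Your frame-operator argument is correct, and in fact more detailed than the paper, which dispenses with that part in one sentence.

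The gap sits exactly in the part you postpone. The paper's proof consists of nothing \emph{but} the phase computation, which you never carry out, and the plan you sketch for it would fail at the step you yourself flag for verification: the two Fourier phases do \emph{not} cancel. In the cocycle product
\begin{equation*}
  \phi_{U_{s_0,s_1}}(z) = \phi_F(z)\,\phi_{S_{s_0}}(Fz)\,\phi_{F^{-1}}(S_{s_0}Fz)\,\phi_{S_{-s_1}}(F^{-1}S_{s_0}Fz)
\end{equation*}
the two Fourier factors are evaluated at arguments separated by the shear $S_{s_0}$. Writing $\phi_F(x,\omega) = e^{\mp 2\pi i x\omega/L}$ and using your (correct) rule $\phi_{F^{-1}}(\lambda) = \phi_F(F^{-1}\lambda)^{-1}$, together with $F^{-1}S_{s_0}Fz = (x-s_0\omega,\omega)^T$, their product is
\begin{equation*}
  e^{\mp 2\pi i x\omega/L}\, e^{\pm 2\pi i (x-s_0\omega)\omega/L} = e^{\mp 2\pi i s_0\omega^2/L},
\end{equation*}
which is not $1$ whenever a frequency-side shear is present. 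This leftover factor is not a harmless nuisance: it is precisely the term that recombines with the chirp phase, via $e^{-2\pi i m/L}\,e^{\pi i m(L+1)/L} = e^{\pi i m(L-1)/L}$ for $m\in\ZZ$, so asserting the cancellation changes the final answer by $e^{\mp 2\pi i s_0\omega^2/L}$; the chirp phases alone reproduce the claimed formula only if this factor is (incorrectly) discarded. The paper sidesteps the issue entirely: its three-line computation never invokes $\phi_F$ at all, because it pushes the time-frequency shift through $\mathcal F$ and $\mathcal F^{-1}$ using the exact, phase-free mixed-order identities $\mathcal F\bd{M}_\omega\bd{T}_x = \bd{T}_\omega\bd{M}_{-x}\mathcal F$ and $\mathcal F^{-1}\bd{T}_\omega\bd{M}_{s_0\omega - x} = \bd{M}_\omega\bd{T}_{x-s_0\omega}\mathcal F^{-1}$, so that the only phases ever produced are the two chirp phases. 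To repair your argument, either adopt that device, or keep the $e^{\mp 2\pi i s_0\omega^2/L}$ term in the cocycle bookkeeping; as written, your derivation of \eqref{eq:shearcoeffs-2} does not go through.
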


\begin{proof}
  Everything but the explicit form of the phase factor $\phi_{U_{s_0,s_1}}$ is a direct consequence of Lemma \ref{lem:meta_phases} and Theorem \ref{thm:shearform}, note 
  \begin{equation*}
   U_{s_0,s_1} = S_{-s_1}F^{-1}S_{s_0}F = \begin{pmatrix}
      1 & -s_0 \\
      -s_1 & s_0s_1+1
    \end{pmatrix}.
  \end{equation*}
  To complete the proof, set $y = (x-s_0\omega)$ and determine the phase factor explicitly: 
  \begin{align*}
    \lefteqn{\bd{U}_{S_{-s_1}}\mathcal{F}^{-1}\bd{U}_{S_{s_0}}\mathcal{F}\bd{M}_\omega\bd{T}_x f}\\
      & = e^{\pi i s_0\omega^2(L+1)/L}\bd{U}_{S_{-s_1}}\mathcal{F}^{-1}\bd{T}_\omega\bd{M}_{s_0\omega-x} \bd{U}_{S_{s_0}}\mathcal{F}f \\
      & = e^{\pi i s_0\omega^2(L+1)/L}\bd{U}_{S_{-s_1}}\bd{M}_\omega \bd{T}_{y} \mathcal{F}^{-1}\bd{U}_{S_{s_0}}\mathcal{F}f \\
      & = e^{\pi i (s_0\omega^2 - s_1 y^2)(L+1)/L}\bd{M}_{\omega-s_1y} \bd{T}_{y} \bd{U}_{S_{-s_1}}\mathcal{F}^{-1}\bd{U}_{S_{s_0}}\mathcal{F}f,
  \end{align*}
  where we used Lemma \ref{lem:meta_phases} and $\exp(2\pi i m(L+1)/L) = \exp(2\pi i m/L)$ for all $m\in\ZZ$.
\end{proof}

For Proposition \ref{pro:sheartrans} to be valid, it remains to prove Theorem \ref{thm:shearform}, establishing the representation of $A$ through $U_{s_0,s_1}$.

\begin{proof}[Proof of Theorem \ref{thm:shearform}]
  By Proposition \ref{pro:lattNF} we can assume without loss of generality that $A$ is in lattice normal form, i.e.
  \begin{equation*}
   A=
   \begin{pmatrix}
     a & 0 \\
     s & b
   \end{pmatrix}.
  \end{equation*}

  To prove equation \eqref{eq:sheardecompose}, we rewrite $U^{-1}_{s_0,s_1}A = DV$ with a diagonal matrix $D$ and a unitary matrix $V$. It can be seen that 
  \[
   U^{-1}_{s_0,s_1} = \left(\begin{array}{cc} s_0s_1+1 & s_0 \\ s_1 & 1 \end{array}\right) = \left(\begin{array}{cc} 1 & s_0 \\ 0 & 1 \end{array}\right)\left(\begin{array}{cc} 1 & 0 \\ s_1 & 1  \end{array}\right).
  \]  
  Now, using Proposition \ref{pro:equilatt} in the step from \ref{eq:beforeequilatt} to \ref{eq:afterequilatt} below, we can write
  \begin{align} 
      U^{-1}_{s_0,s_1}A&=\lefteqn{\left(\begin{array}{cc} 1 & s_0 \\ 0 & 1 \end{array}\right)\left(\begin{array}{cc} 1 & 0 \\ s_1 & 1 \end{array}\right)\left(\begin{array}{cc} a & 0 \\ s & b \end{array}\right)}\nonumber \\
      & = \left(\begin{array}{cc} 1 & s_0 \\ 0 & 1 \end{array}\right)\left(\begin{array}{cc} a & 0 \\ s_1a+s & b \end{array}\right)\label{eq:beforeequilatt}\\
      & = \left(\begin{array}{cc} 1 & s_0 \\ 0 & 1 \end{array}\right)\left(\begin{array}{cc} \frac{ab}{X} & ak_1 \\ 0 & X \end{array}\right)\left(\begin{array}{cc} k_2 & -k_1 \\ Y & b/X \end{array}\right) \label{eq:afterequilatt} \\
      & = \left(\begin{array}{cc} \frac{ab}{X} & s_0 X+ak_1 \\ 0 & X \end{array}\right)\left(\begin{array}{cc} k_2 & -k_1 \\ Y & b/X \end{array}\right). \label{eq:lattreform}
  \end{align}
  Here $X=\gcd(s_1 a+s,b)$, $Y = X^{-1}(s_1a+s)$ and $k_1,k_2$ stem from B\'ezout's identity when representing $\gcd(s_1a+s,b)=k_1(s_1a+s)+k_2 b $. It is important to note that the second matrix in the last line has determinant one. This shows that the lattice $U_{s_0,s_1}A$ is separable if and only if $\tilde{D} = \left(\begin{smallmatrix} ab/X & s_0 X+ak_1 \\ 0 & X \end{smallmatrix}\right)$ is equivalent to a diagonal matrix, i.e.
  \begin{equation} \label{eq:modcond}
    \mod(s_0 X + ak_1, ab/X)=0.
  \end{equation}  
 
We will now deduce numbers $s_0$ and $s_1$ satisfying the our needs from the prime factor decomposition of the involved quantities. Therefore we represent $L = \prod_{j=1}^J p_j^{n_j}$ for a fixed set of prime numbers. Since $a$ and $b$ are divisors of $L$ we find their prime factor decompositions to have exponents $\{\alpha_j \}_{j=1}^J$ and $\{\beta_j \}_{j=1}^J$, where $\alpha_j, \beta_j \leq n_j$. The shearing parameter has the decomposition $s = l \prod_{j=1}^J p_j^{\sigma_j}$, where $\gcd(l,L) = 1$.
 
 We choose
 \begin{equation}\label{eq:s1choice}
  s_1 = \prod_{j=1}^J p_j^{\mu_j} \text{, where } \mu_j =
  \begin{cases}
   1 \text{ for } \alpha_j = \sigma_j \\
   0 \text{ else.}
  \end{cases}
 \end{equation} 
 With this choice of $s_1$ we investigate 
 \begin{equation*}
  X = \gcd(s_1a+s,b) = \prod_{j=1}^J \gcd(s_1 a+s,p_j^{\beta_j}).
 \end{equation*}
 To do so, we have to individually treat three cases:
 \begin{enumerate}
  \item $\alpha_j < \sigma_j$: Since $s_1$ and $p_j$ are coprime we find $\gcd(s_1a+s,p_j^{\beta_j})=p_j^{\min(\alpha_j, \beta_j)}$.
  \item $\alpha_j > \sigma_j$: $\gcd(s_1a+s,p_j^{\beta_j})=p_j^{\min(\sigma_j, \beta_j)}$, and $\min(\sigma_j, \beta_j) < \alpha_j$
  \item $\alpha_j = \sigma_j$: Use Eq. \eqref{eq:s1choice} to determine that $\gcd(s_1a+s,p_j^{\beta_j})=p_j^{\min(\alpha_j, \beta_j)}$ 
 \end{enumerate}
 The above arguments show that with the choice of $s_1$, we find that $X = \prod_{j=1}^J p_j^{\gamma_j}$, where $\gamma_j \leq \alpha_j$.

 Now we turn to the choice of $s_0$. To do so we first decompose $k_1 = l\prod_{j=1}^J p_j^{\kappa_j}$, where $l$ and $L$ are coprime. Let us explain how to choose the shear via the positive part of a vector
 \begin{equation*}
  s_0 = \left( \prod_{j=1}^J p_j^{(\beta_j-\gamma_j - \kappa_j)_+} -l \right) \prod_{j=1}^J p_j^{\alpha_j + \kappa_j - \gamma_j}, 
 \end{equation*} 
 where $(x_+)_j = \max(x_j,0)$. A straightforward calculation shows then that
 \begin{equation*}
  s_0 X +a k_1 = \prod_{j=1}^J p_j^{(\beta_j-\gamma_j - \kappa_j)_+ +\alpha_j + \kappa_j},
 \end{equation*} 
 and we furthermore see that
 \begin{equation*}
  (\beta_j-\gamma_j - \kappa_j)_+ +\alpha_j + \kappa_j \geq \beta_j + \alpha_j -\gamma_j.
 \end{equation*} 
 This proofs that \eqref{eq:modcond} is satisfied, completing the proof.
\end{proof}

\begin{remark}
  It is easy to see that $X$ in the proof above satisfies $\gcd(a,b) = kX$ for some $k\in\NN_0$ and therefore $ab/X$ is a multiple of $X$. Thus, the diagonal matrix constructed above is in fact the Smith normal form of $A$.
\end{remark}

\subsection{Further optimization}

In this section we will first determine which signal lengths are feasible for some given choice of $a,M$ and $\lambda_1, \lambda_2$. This restriction holds for all the presented methods equally and is essential to know in computations.

Particularly when using the shear method described in Subsection \ref{ssec:shears} it is interesting to know for which signal lengths one of the two shears $s_0$ and $s_1$, preferably the frequency side shear $s_0$, can be chosen to be zero. This saves additional computation time.

\begin{proposition} \label{prop:minsig}
 Given the parameters $\lambda=\lambda_1/ \lambda_2$, $a$ and $M$. Then the minimal signal length, for which these parameters are feasible is given by $L_{\text{min}}=\lambda_2\lcm{a,M}$. All the feasible signal lengths are multiples of this.
\end{proposition}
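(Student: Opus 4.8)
The plan is to translate feasibility of the triple $(a,M,\lambda_1/\lambda_2)$ into divisibility conditions on $L$ and then minimize $L$ prime by prime. Recall that $M$ is the number of frequency channels, so the frequency step of the underlying normal form is $b = L/M$, and the shear is recovered from $\lambda = s/b$ as $s = \lambda_1 b/\lambda_2$. First I would substitute these two relations into the three defining conditions of Proposition \ref{pro:lattNF}, namely $a,b\,|\,L$, the integrality $s\in\ZZ$, and the number-theoretic condition $s\in\frac{ab}{\gcd(ab,L)}\ZZ$ (equivalently $ab\mid sL$).

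Carrying out these substitutions, I expect the conditions to collapse to three clean statements: (i) $a\mid L$; (ii) $\lambda_2 M\mid L$, coming from $s=\lambda_1 b/\lambda_2\in\ZZ$ together with $\gcd(\lambda_1,\lambda_2)=1$ (this forces $\lambda_2\mid b=L/M$ and in particular subsumes $M\mid L$, hence $b\mid L$); and (iii) $a\lambda_2\mid \lambda_1 L$, obtained by inserting $b=L/M$ and $s=\lambda_1 L/(\lambda_2 M)$ into $ab\mid sL$ and cancelling a factor of $L$. The remaining normal-form inequality $0\le s<b$ only forces $\lambda_1<\lambda_2$, a constraint on the given parameters rather than on $L$, so I would note it and set it aside.

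With (i)--(iii) in hand, I would first check that $L=\lambda_2\lcm{a,M}$ is feasible: $a\mid\lcm{a,M}$ gives (i) and (iii) (since then $a\mid\lambda_1\lcm{a,M}$), while $M\mid\lcm{a,M}$ gives (ii). The substance of the proposition is minimality, i.e.\ that every feasible $L$ is a multiple of $\lambda_2\lcm{a,M}$. I would prove this by comparing $p$-adic valuations $v_p$ for each prime $p$, using $\gcd(\lambda_1,\lambda_2)=1$ to split into two cases. If $p\nmid\lambda_2$, then $v_p(\lambda_2\lcm{a,M})=\max(v_p(a),v_p(M))$, and (i),(ii) give $v_p(a)\le v_p(L)$ and $v_p(M)\le v_p(L)$. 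If $p\mid\lambda_2$ (so $p\nmid\lambda_1$), then (ii) gives $v_p(\lambda_2)+v_p(M)\le v_p(L)$ and (iii) gives $v_p(\lambda_2)+v_p(a)\le v_p(L)$, whose maximum is exactly $v_p(\lambda_2\lcm{a,M})$. In either case $v_p(\lambda_2\lcm{a,M})\le v_p(L)$, so $\lambda_2\lcm{a,M}\mid L$.

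The main obstacle I anticipate is the bookkeeping in deriving condition (iii) from the number-theoretic condition $s\in\frac{ab}{\gcd(ab,L)}\ZZ$ and recognizing that it is precisely this condition---rather than mere integrality of $s$---that supplies the valuation bound $v_p(\lambda_2)+v_p(a)\le v_p(L)$ in the case $p\mid\lambda_2$. Without (iii) one would only obtain $\operatorname{lcm}(a,\lambda_2 M)\mid L$, which is in general a proper divisor of $\lambda_2\lcm{a,M}$; the interplay of (ii) and (iii) is what pins down the extra factor of $\lambda_2$.
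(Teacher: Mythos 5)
Your proof is correct and takes essentially the same route as the paper's: your conditions (i)--(iii) are exactly the paper's feasibility conditions $a,M\mid L$, $\frac{L}{a}\lambda\in\ZZ$, $\frac{L}{M}\lambda\in\ZZ$ after substituting $b=L/M$ and $s=\lambda_1 L/(\lambda_2 M)$, and both arguments settle minimality prime-by-prime using $\gcd(\lambda_1,\lambda_2)=1$. The only difference is presentational: the paper packages the exponent comparison as the identity $\lcm{\frac{a\lambda_2}{\gcd(a,\lambda_1)},a}=\lambda_2 a$ (and its analogue for $M$), whereas you state the same $p$-adic valuation inequalities directly and, in addition, explicitly verify that $L_{\text{min}}$ itself is feasible, a point the paper leaves implicit.
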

\begin{proof}
 For the parameters in combination with a given signal length $L$ to form a lattice we require the following conditions
 \begin{equation*}
  \begin{split}
   a|L, \; M|L \\
   \frac{L}{a} \lambda \in \ZZ \\
   \frac{L}{M} \lambda \in \ZZ,
  \end{split}
 \end{equation*}
where the first conditions immediately yield $\lcm{a,M}|L$. From the other two conditions we can derive 
\[ 
  a\lambda_2/\gcd(a,\lambda_1) | L\quad \text{ and }\quad M\lambda_2/\gcd(M,\lambda_1) | L. 
\]
Therefore, the signal length has to be a multiple of
\begin{equation*}
 L_{\text{min}} = \lcm{\frac{a\lambda_2}{\gcd(a,\lambda_1)},\frac{M\lambda_2}{\gcd(M,\lambda_1)},a,M}.
\end{equation*} 
We proceed to show that 
\begin{equation} \label{eq:gcda}
\lcm{\frac{a\lambda_2}{\gcd(a,\lambda_1)},a} = \lambda_2a.  
\end{equation} 
For this purpose we look at the prime factor decomposition of the involved quantities, where we denote by $\alpha_j, \gamma_j, \delta_j$ the exponents of the prime number $p_j$ of $a, \lambda_1$ and $\lambda_2$ respectively. Then we find, since $\lambda_1$ and $\lambda_2$ are co-prime that the exponent of $p_j$ of $\lcm{a\lambda_2/\gcd(a,\lambda_1),a}$ is given by
\begin{equation*}
 \max(\alpha_j - \min(\alpha_j,\gamma_j)+\delta_j, \alpha_j) = \alpha_j+\delta_j,
\end{equation*} 
proving \eqref{eq:gcda}. The proof that $\lcm{M\lambda_2/\gcd(M,\lambda_1),M} = \lambda_2 M$ is completely analogous. Combine these to find
\begin{equation*}
 \begin{split}
  \lefteqn{\lcm{\frac{a\lambda_2}{\gcd(a,\lambda_1)},\frac{M\lambda_2}{\gcd(M,\lambda_1)},a,M}}\\
      &= \lcm{\lambda_2a,\lambda_2M} = \lambda_2 \lcm{a,M} .
 \end{split}
\end{equation*} 
\end{proof}

Now we shall investigate, which multiples of the just derived minimal signal length allow for computation without the frequency shear. To do so, it is instructive to compute the set of factors $l$, for which $L = l L_{\text{min}}$ needs only the time shear. 
We will introduce here some important constants
related to the {\em time shift} $a$, the {\em frequency shift} $b$, the {\em number of channels} $M=L/b$ and the {\em number of time shifts} $N=L/a$. We define $c,d,p,q\in\mathbb{N}$ by
\begin{eqnarray}
c=\gcd\left(a,M\right) & , & d=\gcd\left(b,N\right),\label{eq:cd}\\
p=\frac{a}{c}=\frac{b}{d} & , & q=\frac{M}{c}=\frac{N}{d}.\label{eq:pq}
\end{eqnarray}
With these numbers, the \emph{redundancy} of a Gabor system can be
written as $L/\left(ab\right)=q/p,$ where $q/p$ is an irreducible
fraction. It holds that $L=cdpq$. Some of the introduced notation will be important in the next section.

\begin{proposition}\label{pro:noshearlength}
  Given $\lambda$, $a$ and $M$. Let the prime factor decomposition of $c= \gcd(a,M)$ be given by
  \begin{equation*}
    c = \prod_{j=1}^J p_j^{\gamma_j}
  \end{equation*}
  for some set of prime factors and corresponding exponents. Let
  \begin{equation*}
    c_1= \prod_{j=1}^J p_j^{\sigma_j}, \quad
    \sigma_j=
    \begin{cases}
      \gamma_j \quad & \text{if $\gcd(\lambda_2, p_j)=0$} \\
      0 & \text{else},
    \end{cases}
  \end{equation*}
  then the frequency shear can be chosen to be $0$ if the signal length satisfies
  \begin{equation}\label{eq:noshearlength}
    L = n L_\text{min} \frac{c}{c_1},
  \end{equation}
  for some $n\in \NN$. In words, $c_1$ are factors of $c$ that are relatively prime to $\lambda_2$
\end{proposition}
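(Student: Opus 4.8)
The plan is to reduce the question of whether $s_0$ may be taken to be $0$ to a single divisibility condition on the lattice, and then to express that condition in terms of $L$ by comparing prime factorizations, exactly in the spirit of the proof of Theorem \ref{thm:shearform}.

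First I would specialize that proof to $s_0=0$. Since $U_{0,s_1}^{-1} = \left(\begin{smallmatrix} 1 & 0 \\ s_1 & 1 \end{smallmatrix}\right)$, writing $A$ in normal form gives
\[
 U_{0,s_1}^{-1}A = \begin{pmatrix} a & 0 \\ s_1 a + s & b \end{pmatrix}.
\]
Left multiplication by $\left(\begin{smallmatrix} 1 & 0 \\ s_1 & 1 \end{smallmatrix}\right)$ preserves first coordinates and fixes every $(0,\omega)^T$, so by Proposition \ref{pro:lattNF} the normal-form parameters $a,b$ are unchanged and only the shear is replaced by $(s_1a+s)\bmod b$. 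Hence $U_{0,s_1}^{-1}A$ generates a separable lattice precisely when $b\mid s_1a+s$. (The same conclusion follows from setting $s_0=0$ in \eqref{eq:modcond}: it becomes $(b/X)\mid k_1$, while the B\'ezout relation $k_1(s_1a+s)+k_2b=X$ forces $\gcd(k_1,b/X)=1$, so $(b/X)\mid k_1$ can hold only when $X=b$, i.e. again $b\mid s_1a+s$.)

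Next I would note that a valid $s_1$ with $s_0=0$ exists if and only if the congruence $s_1 a\equiv -s \pmod{b}$ is solvable, which happens exactly when $\gcd(a,b)\mid s$. This single condition is what the statement must encode. To translate it, write $L=rL_{\text{min}}$ with $r=L/L_{\text{min}}\in\NN$; feasibility and Proposition \ref{prop:minsig} give $b=L/M=r\lambda_2 a/c$ and $s=\lambda_1 b/\lambda_2 = r\lambda_1 a/c$. I would then test $\gcd(a,b)\mid s$ prime by prime using the $p$-adic valuation $v_p$: with $\alpha=v_p(a)$, $\gamma=v_p(c)$, $\ell_1=v_p(\lambda_1)$, $\ell_2=v_p(\lambda_2)$ (so $\min(\ell_1,\ell_2)=0$) and $\rho=v_p(r)$, one has $v_p(b)=\rho+\ell_2+\alpha-\gamma$ and $v_p(s)=\rho+\ell_1+\alpha-\gamma$, and the condition is $\min(\alpha,v_p(b))\le v_p(s)$. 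For a prime $p\nmid\lambda_2$ one has $\ell_2=0$, hence $v_p(s)\ge v_p(b)$ and the inequality is automatic; for a prime $p\mid\lambda_2$ one has $\ell_1=0$, $\ell_2>0$, and a short case distinction shows the inequality reduces to $v_p(c)\le v_p(r)$. Imposing this for all $p\mid\lambda_2$ is exactly $\prod_{p\mid\lambda_2}p^{v_p(c)}\mid r$, and since this product is $c/c_1$ by the definition of $c_1$, the admissible lengths are precisely $L=n\,(c/c_1)\,L_{\text{min}}$, which is \eqref{eq:noshearlength}.

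The main obstacle is the valuation bookkeeping in the final step: one must separate the primes dividing $\lambda_1$ from those dividing $\lambda_2$, keep the feasibility constraints from Proposition \ref{prop:minsig} in force so that $b$ and $s$ are genuine integers, and carry out the case distinction showing that $\min(\alpha,v_p(b))\le v_p(s)$ is free off $\lambda_2$ but pins down $v_p(r)\ge v_p(c)$ at the primes of $\lambda_2$. The two earlier steps are routine once Theorem \ref{thm:shearform} and the solvability criterion for linear congruences are in hand.
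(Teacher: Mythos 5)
Your proof is correct, and it reaches the paper's conclusion by a cleaner route than the paper itself. Both arguments share the same skeleton: reduce ``no frequency shear needed'' to a single divisibility statement about the lattice, then settle that statement prime by prime, splitting according to whether $p_j$ divides $\lambda_2$, which is exactly where the quantity $c/c_1$ emerges. The difference is in how the free parameter is handled. The paper starts from the criterion that $(s+kb)/a\in\ZZ$ for some $k\in\{0,\ldots,M-1\}$ (asserted without derivation, whereas you actually derive your criterion from the structure of $U_{0,s_1}^{-1}$ and Proposition \ref{pro:lattNF}), keeps $k$ alive throughout, rewrites the admissible lengths as $L=nL_\text{min}\,c/\gcd(c,\lambda_1+k\lambda_2)$ in \eqref{eq:fullset}, and then must maximize $\gcd(c,\lambda_1+k\lambda_2)$ over $k$ --- a prime-by-prime construction of residues glued together by a Chinese-remainder argument, followed by a final verification that the optimal $k$ can be shifted into $\{0,\ldots,M-1\}$. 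You instead eliminate the parameter immediately: the congruence $s_1a\equiv -s\pmod{b}$ is solvable iff $\gcd(a,b)\mid s$ (your criterion, mod $b$ in the unknown $s_1$, and the paper's, mod $a$ in the unknown $k$, are both equivalent to this), after which the $p$-adic bookkeeping in $r=L/L_\text{min}$ is a direct computation with no CRT and no range check. Your route buys two things: brevity, and the fact that the argument is visibly an equivalence, so the ``if'' in the proposition is actually ``iff'' --- indeed your computation shows that the ``full set'' of lengths in the paper's subsequent remark coincides exactly with the multiples of $L_\text{min}\,c/c_1$, sharpening the remark's cautious phrasing. What the paper's parametrized version buys is the explicit per-$k$ family \eqref{eq:fullset} and a concrete witness $k$ (equivalently $s_1=-(s+kb)/a$) of the kind \textsc{shearfind} must produce, though your congruence also yields $s_1$ constructively via the extended Euclidean algorithm, so nothing essential is lost.
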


\begin{proof}
 With the standard notation we easily see that the time shear is sufficient if and only if $(s+kb)/a \in \ZZ$ for some $k\in \{0, \ldots, M-1\}$. Rewriting this leads to
 \begin{equation*}
  L = \tilde l \frac{Ma\lambda_2}{\lambda_1+k \lambda_2} = l\frac{Ma\lambda_2}{\gcd(\lambda_1+k \lambda_2,Ma \lambda_2)},
 \end{equation*} 
 for some $l\in \ZZ$. However, these signal lengths might not be compatible with the feasibility condition from Proposition \ref{prop:minsig}. Therefore we compute the ratio
 \begin{equation*}
  \frac{L}{L_{\text{min}}} = l \frac{\gcd(M,a)}{\gcd(\lambda_1+k \lambda_2,Ma \lambda_2)}.
 \end{equation*} 
 Since this fraction should be an integer number, we have to choose
 \begin{equation*}
   l = n \frac{\gcd(\lambda_1 + k \lambda_2, Ma \lambda_2)}{\gcd(M,a, \lambda_1 + k\lambda_2, M a \lambda_2)},
 \end{equation*}
 for some $n\in \NN$. Therefore, we can compute
 \begin{equation}\label{eq:fullset}
   L= n L_\text{min} \frac{\gcd(M,a)}{\gcd(M,a,\lambda_1 + k \lambda_2)}.
 \end{equation}

 With the notation introduced above we are now interested in computing
 \begin{equation}\label{eq:restrset}
   \max_{k\in \NN} (\gcd(c,\lambda_1 + k \lambda_2)).
 \end{equation}
 Firstly, we rewrite
 \begin{equation*}
   \gcd(c,\lambda_1 + k \lambda_2) = \prod_{j=1}^J \gcd(p_j^{\gamma_j},\lambda_1 + k\lambda_2).
 \end{equation*}
 Now we will individually investigate the factors in the product above.

 \textbf{Case 1.} $\gcd(p_j,\lambda_2)=1$, in which case we can find numbers $k_{1,j}, k_{2,j}$, such that
 \begin{equation*}
   \lambda_1 + k_{1,j} \lambda_2 = k_{2,j} p_j^{\gamma_j}.
 \end{equation*}
 Furthermore, the full set of coefficients of $\lambda_2$, for which the above equation can be satisfied is given by $K_j = \left\{ k_{1,j} + m p_j^{\gamma_j}: m\in \ZZ  \right\}$.
 Therefore, for any $k \in K_j$ we find
 \begin{equation*}
   \gcd(p_j^{\gamma_j},\lambda_1 + k\lambda_2)= p_j^{\gamma_j}.
 \end{equation*}

 \textbf{Case 2.} $\gcd(p_j, \lambda_2)\neq 1$, which implies directly that $\lambda_2$ is a multiple of $p_j$.
 In this case we have to argue that $\lambda_1 + k \lambda_2$ can never be multiple of $p_j$. 
 Indeed, any linear combination $k_{1,j}\lambda_2+ k_{2,j} p_j$ is a multiple of $p_j$ and therefore not equal to $\lambda_1$, which is assumed to be relatively prime to $\lambda_2$.
 Consequently, for any choice of $k\in \ZZ$
 \begin{equation*}
   \gcd(p_j^{\gamma_j},\lambda_1 + k\lambda_2)= 1.
 \end{equation*}

 For all the indices $j$ in case $1$, it is easy to see that the intersection of the corresponding sets $K_j$ is not empty.
 This is an immediate consequence from the fact that powers of two different prime numbers have no common divisors.
 Using the notation introduced above, we can conclude that there exists some $k \in \ZZ$, such that
 \begin{equation*}
   \gcd(c, \lambda_1+\lambda_2) = c_1.
 \end{equation*}
 The last argument needed is to show that $k\in \left\{ 0, \ldots, M-1 \right\}$.
 By construction $s+kb = \tilde k a$, for some $\tilde k \in \ZZ$.
 Therefore, for any $m\in \ZZ$
 \begin{equation*}
   s+\left( k+ m \frac{L}{b} \right)b = \left( \tilde k + m \frac{L}{a} \right) a,
 \end{equation*}
 and for an appropriate choice of $m$, the expression in brackets on the left hand side will evaluate some number in the desired range.

\end{proof}

\begin{remark}
  There are possibly other feasible signal lengths than determined by
  \eqref{eq:noshearlength}. The full set of feasible lengths is determined by
  \begin{equation*}
    \left\{ n L_\text{min} \frac{\gcd(M,a)}{\gcd(M,a,\lambda_1 + k \lambda_2)}: \; n\in \NN, k\in \left\{ 0, \ldots, M-1 \right\} \right\}.
  \end{equation*}
  This can be easily seen from \eqref{eq:fullset} in the proof above. 
  For simplicity we only construct the minimal factor, that $L_\text{min}$
  has to be multiplied with, as stated in \eqref{eq:restrset}.
\end{remark}
\begin{remark}
  Looking at \eqref{eq:noshearlength} we see that if we are given a
  certain redundancy \eqref{eq:pq} $q/p$ and a lattice type
  $\lambda_1/\lambda_2$ and want to get a low value of $L_{min}c/d$ we
  must choose $c$ such that it is relatively prime to $\lambda_2$. As an
  example, consider a common choice of $a=32$, $M=64$ and
  $\lambda_1/\lambda_2=1/2$ (the quincunx lattice). In this case
  $c=\gcd(a,M)=32$ which is the worst possible case, as it is a power of
  $\lambda_2=2$ giving a value of $L_{min}c/d=128\cdot 32=4096$. If we
  instead choose $a=27$, $M=54$ (which is the same redundancy) we get
  $L_{min}c/d=108\cdot 1=108$. This illustrates that it is possible to
  work efficiently with the quincunx lattice by not choosing the
  rectangular lattice parameters to be powers of 2.
\end{remark}

\subsection{Extension to higher dimensions}

  It is well known~\citep{pa07,chfepa11,feka97} that multidimensional Gabor transforms
  and dual windows can be computed using algorithms designed for the $1$D case, if both the
  Gabor window and the lattice used can be written as a tensor product. That is, we assume that 
  with $l = (l_1,\ldots,l_n)^T \in \CC^{L_1}\times\ldots\times \CC^{L_n}$,
  \[
   g(l) = g_1(l_1) \otimes \ldots \otimes g_n(l_n)
  \]
  and
  \[
   \Lambda = \Lambda_1 \times \ldots \times \Lambda_n = A_1\ZZ^2_{L_1} \times \ldots \times A_n\ZZ^2_{L_n}
  \]
  for some $A_j\in\ZZ^2_{L_j}\times\ZZ^2_{L_j}$ for $j = 1,\ldots,n$.
  
  Equivalently, we can say that $\Lambda$ can be described by a block matrix 
  \begin{equation}\label{eq:biglattmatrix}
   A = \left(\begin{array}{cc}
          D & E \\ F & G
         \end{array}\right).
  \end{equation}
  with diagonal blocks $D,E,F,G\in\ZZ^{n\times n}$.
  In this case, the multidimensional transform and dual window can be computed by subsequently applying 
  the algorithms presented in the previous sections in every dimension. A matrix describing the lower dimensional
  lattice corresponding to dimension $j$ is simply given by
  \[
  A_j = \left(\begin{array}{cc}
          D_{j,j} & E_{j,j} \\
          F_{j,j} & G_{j,j}         
         \end{array}\right)
  \]
  and can be transformed into lattice normal form \eqref{eq:lattNF}, allowing straightforward application 
  of the presented algorithms. 
  
  However, we are not aware of a constructive method to determine whether a lattice, given by an arbitrary matrix, 
  can be described by a banded matrix of the form \eqref{eq:biglattmatrix}.

  In contrast to the methods based on metaplectic operators, it is easier to extend the multiwindow approach from \ref{ssec:multiwin} to higher dimensions. For reasons of readability we will not give details here.

\section{Implementation and timing}\label{sec:Implementation}

In this section we discuss the implementation and speed of the proposed algorithms.
It is important for this section to recall the definition of the constants $c,d$
in \ref{eq:cd} and $p,q$ in \eqref{eq:pq}.\\

\noindent\textbf{Methodology for computing the computational complexity.}
To compute the Discrete Fourier transform, the familiar FFT algorithm
is used. When computing the flop (floating point operations) count of
the algorithm, we will assume that a complex FFT of length $M$ can be
computed using $4M\log_{2}M$ flops. A review of flop counts for FFT
algorithms is presented in \citep{jofr07}. When computing the flop
count, we assume that both the window and signal are complex valued.

The cost of performing the computation of a DGT with a full length
window on a rectangular lattice using the algorithm first reported in
\citep{ltfatnote011} is given by
\begin{align}
 \lefteqn{8Lq+4L\log_{2}d+4MN\log_{2}d+4MN\log_{2}\left(M\right)}\label{eq:rect-flops}\\
 & = L\left(8q+4\log_{2}d\right)+4MN\left(\log_{2}L/p\right)\hspace{40pt}\label{eq:rect-flops-cleaned}
\end{align}
where the first terms in \eqref{eq:rect-flops} come from the
multiplication of the matrices in the factorization, the two middle
terms come from creating the factorization of the signal and
inverting the factorization of the coefficients, and the last term
comes from the final application of FFTs. The terms can be collected
as in \eqref{eq:rect-flops-cleaned}, where the first term grows as the
length of the signal $L$, and the second terms grows as the total
number of coefficients $MN$. In the following, we refer to this as the {\em full window} algorithm.

If the window is an FIR window supported on an index set with width
$L_g$ which is much smaller than the length of the signal $L$, the
{\em weighted-overlap-add algorithm}, first reported in \citep{po76}, can
be used instead. It has a computational complexity of
\begin{equation*}
8L\frac{L_{g}}{a}+4NM\log_{2}M.\label{eq:rect-flops-FIR}
\end{equation*}
In the following, we refer to this as the {\em FIR window} algorithm.

A third approach to computing a DGT is a hybrid approach, where a DGT
using an FIR window can be computing using a full window algorithm on
blocks of the input signal. The blocks are then combined using the
classical {\em overlap-add} algorithm, cf. \citep{stockham1966high,helms1967fast}.

The OLA algorithm works by partitioning a system of length $L$ into
blocks of length $L_{b}$ such that $L=L_{b}N_{b}$, where $N_{b}$
is the number of blocks. The block length must be longer than the
support of the window, $L_{b}>L_{g}$. To perform the computation
we take a block of the input signal of length $L_{b}$ and zero-extend
it to length $L_{x}=L_{b}+L_{g}$, and compute the convolution with the
extended signal using the similarly extended window. Because of the
zero-extension of the window and signal, the computed coefficients
will not be affected by the periodic boundary conditions, and it is
therefore possible to overlay and add the computed convolutions of
length $L_{x}$ together to form the complete convolution of length
$L$.

The equations \eqref{eq:rect-flops-cleaned}, \eqref{eq:rect-flops-FIR}
are used to express the efficiency of the algorithms for the DGT on
nonseparable lattices.

\subsection{Implementation of the shear algorithm}

The shear algorithm proposed in Proposition \ref{pro:sheartrans} 
computes the DGT on a nonseparable lattice
using a DGT on a separable lattice with some suitable pre- and
postprocessing steps. The computational complexity of the pre- and
postprocessing steps is significant compared to the separable DGT,
so we wish to minimize the cost of these steps. An implementation of the shear 
algorithm is presented as Algorithm \ref{alg:shear}. Note that we assume the existence 
of several underlying routines: An implementation \textsc{dgt} of the separable Gabor transform, 
the periodic chirp \textsc{pchirp}(L,s)$ = \exp(\pi i s\cdot^2 (L+1)/L)$ and \textsc{shearfind}, a
program that determines the shear parameters $s_0,s_1$ and the correct separable lattice to do the 
DGT on, following the constructive proof of Theorem \ref{thm:shearform}.

\begin{algorithm}[t!ph]
\caption{The shear algorithm: $c = \textsc{dgtns}(f,g,a,M,\lambda)$}
\label{alg:shear}
\begin{algorithmic}[1]
\State $\left[s_0,s_1,b_r\right]=\textsc{shearfind}(L,a,M,\lambda)$
\If{$s_1 \neq 0$}
    \State $p \gets \textsc{pchirp}(L,s_1)$
    \State $g(\cdot) \gets  p(\cdot)g(\cdot)$
    \State $f(\cdot) \gets  p(\cdot)f(\cdot)$
\EndIf

\If{$s_0 = 0$}

    \State $c_r \gets  \textsc{dgt}(f,g,a,M)$
    \State $C_1\gets s_1 a (L+1) \pmod{2N}$

    \For{$k=0\to N-1$}   
        \State $E \gets  e^{\pi i (C_1 k^2 \pmod{2N})/N}$
        \For{$m=0\to M-1$}
            \State $c(\left\lfloor\frac{-s_1 k a+m b \pmod{L}}{b}\right\rfloor,k) \gets  Ec_r(m,k)$
        \EndFor
    \EndFor
    
\Else 
    \State $a_r\gets \frac{ab}{b_r}, \quad M_r\gets \frac{L}{b_r}, \quad  N_r\gets \frac{L}{a_r}$
    \State $C_1\gets \frac{a_r}{a},\quad C_2\gets -s_0 b_r/a$
    \State $C_3\gets as_1(L+1),\quad C_4\gets C_2 b_r(L+1)$
    \State $C_5\gets 2 C_1 b_r,\quad C_6\gets (s_0 s_1+1) b_r$
    
    \State $p \gets  \textsc{pchirp}(L,-s_0)$
    \State $g(\cdot) \gets  p(\cdot)\textsc{fft}(g(\cdot))/L$
    \State $f(\cdot) \gets  p(\cdot)\textsc{fft}(f(\cdot))$
    
    \State $c_r \gets  \textsc{dgt}(f,g,b_r,N_r)$
    
    \For{$k=0\to N_r-1$}
        \For{$m=0\to M_r-1$}
            \State $s_{q1}\gets C_1k+C_2 m \pmod{2N}$
            \State $E \gets  e^{\pi i (C_3 s_{q1}^2-m (C_4 m+C_5k) \pmod{2N})/N}$
            
            \State $\tilde m \gets  C_1k       +C_2 m \pmod{N}$
            \State $\tilde k \gets  \left\lfloor \frac{-s_1 a_r k +C_6 m \pmod{L}}{b}\right\rfloor$
            
            \State $c(\tilde k,\tilde m) \gets  Ec_r(-k \pmod{N_r},m)$
        \EndFor
    \EndFor                    
\EndIf
\end{algorithmic}
\end{algorithm}

A simple trick is to notice that when a frequency-side shear is
needed, the DFT of the signal $f$ and the window $g$ are multiplied by
a chirp on the frequency side, $\tilde{f} =
\bd{U}_{s_0,s_1}^{-1}f$ and $\tilde{g} =
\bd{U}_{s_0,s_1}^{-1}g$. The total cost of this is 4 FFT's and two
pointwise multiplications. However, instead
of transforming the chirped signal and window back to the time domain,
we can compute the nonseparable DGT directly in the frequency domain
using the well-known commutation relation of the DFT and the
translation and modulation operators:
\begin{eqnarray}
\left\langle f,M_{m}T_{n}g\right\rangle &=&e^{-\pi imn/L}\left\langle \mathcal{F}f,M_{-n}T_{m}\mathcal{F}g\right\rangle 
\end{eqnarray}
This trick saves the two inverse FFTs at the expense of the
multiplication of the coefficients by a complex exponential and
reshuffling. As we already need these operations to realize 
\eqref{eq:shearcoeffs} and \eqref{eq:shearcoeffs-2}, they can be
combined with no additional computational complexity.

The overlap-add algorithm can be used in conjunction with the shear
algorithm in the following case: we wish to compute the DGT with an
FIR window for a nonseparable lattice using the shear
algorithm. Because of the frequency-side shearing, the window is
converted from an FIR window into a full length window, making it
impossible to perform real-time or block-wise processing. However, if
the shear algorithm is used inside an OLA algorithm, this is no longer
a concern, as the shearing will only convert the window into a window
of length $L_{g}+L_{b}$, restoring the ability to perform block-wise
processing.

In total, the shear-OLA algorithm for the DGT is calculated in three steps using the three algorithms:

\begin{enumerate}
\item Split the input signal into blocks using the overlap-add algorithm
\item Apply the shears to the blocks of the input signal as in the shear algorithm
\item Use the full-window rectangular lattice DGT on the sheared signal blocks.
\end{enumerate}

The downside of the shear-OLA algorithm is that the total length of
the DGTs is longer than the original DGT by
\begin{equation}
\rho=\frac{L_{g}+L_{b}}{L_{b}},\label{eq:OLA-eff} 
\end{equation}
where $L_{b}$ is the block length. Therefore, a trade-off between the
block length and the window length must be found, so that the block
length is long enough for \eqref{eq:OLA-eff} to be close to one, but
at the same time small enough to not impose a too long processing
delay.

\subsection{Dual and tight windows}

The shear method in Proposition \ref{pro:sheartrans} can also be
used to compute the canonical dual and canonical tight windows, using
the factorization of the frame operator given in 
\eqref{eq:shearframeop}. The complete algorithm for the canonical dual
window is shown in \ref{alg:sheardual}, and uses the same trick as
the Gabor transform algorithm to compute the canonical dual when a
frequency side shear is needed: do it in the Fourier domain without
transforming back. Again, we assume the existence of an implementation
\textsc{gabdual} for the computation of Gabor dual windows on separable lattices.

\begin{algorithm}[t!]
\caption{Dual window via shearing: \\ $\tilde{g} = \textsc{gabdualns}(g,a,M,\lambda)$}
\label{alg:sheardual}
\begin{algorithmic}[1]
\State $\left[s_0,s_1,b_r\right]=\textsc{shearfind}(L,a,M,\lambda)$

\If{$s_1 \neq 0$}
    \State $p \gets \textsc{pchirp}(L,s_1)$
    \State $g(\cdot) \gets  p(\cdot)g(\cdot)$
\EndIf

\State $b   \gets \frac{L}{M},\quad M_r \gets \frac{L}{b_r},\quad a_r \gets \frac{ab}{b_r}$

\If{$s_0 = 0$}

\State $g_d\gets\textsc{gabdual}(g,a_r,M_r)$
\Else 

\State $p_0 \gets\textsc{pchirp}(L,-s_0)$
\State $g(\cdot) \gets p_0(\cdot)\textsc{fft}(g)(\cdot)$
\State $g_d \gets L\cdot\textsc{gabdual}(g,L/M_r,L/a_r)$
\State $g_d \gets \textsc{ifft}(\overline{p_0}(\cdot)g_d(\cdot))$       

\EndIf
\end{algorithmic}
\vspace{-0pt}
\end{algorithm}

To compute the canonical dual and tight windows on a separable
lattice, the matrices are first factorized as in
\citep{st98-8,ltfatnote011} and then the factorized matrices are
transformed as in \citep{ltfatnote007}.

\subsection{Analysis of the computational complexity}\label{sec:Computational-complexity}

\begin{table}[t!h]
\caption{\label{tab:Flop-counts}Flop counts for different ways of
  computing the DGT on a nonseparable lattice. First column list the
  algorithm, second column the flop count for the particular
  algorithm.  Listed from the top, the algorithms are: The
  multiwindow algorithm using the full window rectangular lattice
  algorithm, the multiwindow algorithm using the FIR window
  rectangular lattice algorithm, the Smith normal form algorithm using
  the full window rectangular lattice algorithm, the shear algorithm
  when no frequency shear is needed, the shear algorithm including the
  frequency shear and finally the overlap-add versions of the shear
  algorithms.
  The term $L_{g}$ denotes the length of the window used so
  $L_{g}/a$ is the overlapping factor of the window.}

\definecolor{Gray}{gray}{0.9}
\hfill{}%
\begin{small}
\begin{tabular}{|l|c|}
\hline
Alg.: & Flop count\tabularnewline \hline
\hline 
\rowcolor{Gray} Multi-window & \tabularnewline

FIR. & $8L\frac{L_{g}}{a}+4NM\log_{2}M$\tabularnewline [.3ex]
\rowcolor{Gray}
Full. &
$L\lambda_{2}\left(8q_{mw}+4\log_{2}d_{mw}\right)$\\ \rowcolor{Gray} & $+MN\left(4\log_{2}L/p_{mw}+6\right)$\tabularnewline [.3ex]
\hline 
SNF &
$L\left(8q+4\log_{2}d_{sm}+8\log_{2}L+18\right)$\\ & $+MN\left(4\log_{2}L/p+6\right)$\tabularnewline [.3ex]
\hline 
\rowcolor{Gray} Shear alg. & \tabularnewline
No freq. shear &
$L\left(8q+4\log_{2}d+6k_{time}\right)$\\ & $+MN\left(4\log_{2}L/p+6k_{time}\right)$\tabularnewline [.3ex]
\rowcolor{Gray} Freq. shear &
$L\left(8q+4\log_{2}Lc_{sh}+6+6k_{time}\right)$\\ \rowcolor{Gray} & $+MN\left(4\log_{2}L/p+6\right)$\tabularnewline [.3ex]
\hline 
\rowcolor{Gray} Shear OLA & \tabularnewline
No freq. shear&
$\rho L\left(8q+4\log_{2}\rho d_{shola}+6k_{time}\right)$\\ & $+\rho MN\left(4\log_{2}\rho L_b/p+6k_{time}\right)$
  \tabularnewline [.3ex]
\rowcolor{Gray} Freq. shear&
$\rho L\left(8q+4\log_{2}\rho Lc_{shola}+6k_{time}+6\right)$\\ \rowcolor{Gray} & $+\rho MN\left(4\log_{2}\rho L_b/p+6\right)$
  \tabularnewline [.3ex]
\hline
\end{tabular}\hfill{}
\end{small}
\end{table}

The flop counts of the various algorithms used for computing the DGT
on a nonseparable lattice is listed in Table \ref{tab:Flop-counts}.

Based on the computational complexity presented in the table, any of
the algorithms may for some specific problem setup be the fastest,
except for the Smith-normal form algorithm which is always slower than
the shear algorithm:

\begin{itemize}

\item The multiwindow algorithm for FIR windows is the fastest for
  very short windows.

\item The multiwindow-OLA algorithm is the fastest for simple
  lattices ($\lambda_2$ small) and medium length windows.

\item The shear-OLA algorithm is the fastest for more complex
  lattices ($\lambda_2$ large) and medium length windows.

\item The multiwindow algorithm is the fastest for simple lattices
  ($\lambda_2$ small) and very long windows.

\item The shear algorithm is the fastest for more complex
  lattices ($\lambda_2$ large) and very long windows.

\end{itemize}

\subsection{Numerical experiments}\label{sec:Numerical experiments}

Implementations of the algorithms described in this paper can be found
in the Large Time Frequency Analysis Toolbox (LTFAT), cf. \citep{ltfatnote015},\citep{ltfatweb}.
An appropriate algorithm will be automatically invoked when calling
the \textsc{dgt} or \textsc{dgtreal} functions. The implementations
are done in both the \noun{Matlab} / \noun{Octave} scripting language
and in C. All tests were performed on an Intel i7 CPU operating at 3.6 GHz.

As the speed of the algorithms depends on a large number of parameters
$a$, $M$, $L$, $L_g$, $c$, $d$, $s_0$, $s_1$ and similar parameters
relating to the multiwindow and shear transforms, we cannot provide
an exhaustive illustration of the running times. Instead we will
present some figures that illustrates the crossover point of when the
the shear algorithm becomes faster than the multiwindow algorithm as
the lattice complexity $\lambda_2$ increases. The behavior of the
algorithms as the window length $L_g$ increases is completely
determined by the algorithms for the rectangular lattice, so we refer
to \citep{ltfatnote011} for illustrations.

\begin{figure}  
  \includegraphics[height=0.32\textwidth,width=0.45\textwidth,trim=80 30 30 0, clip]{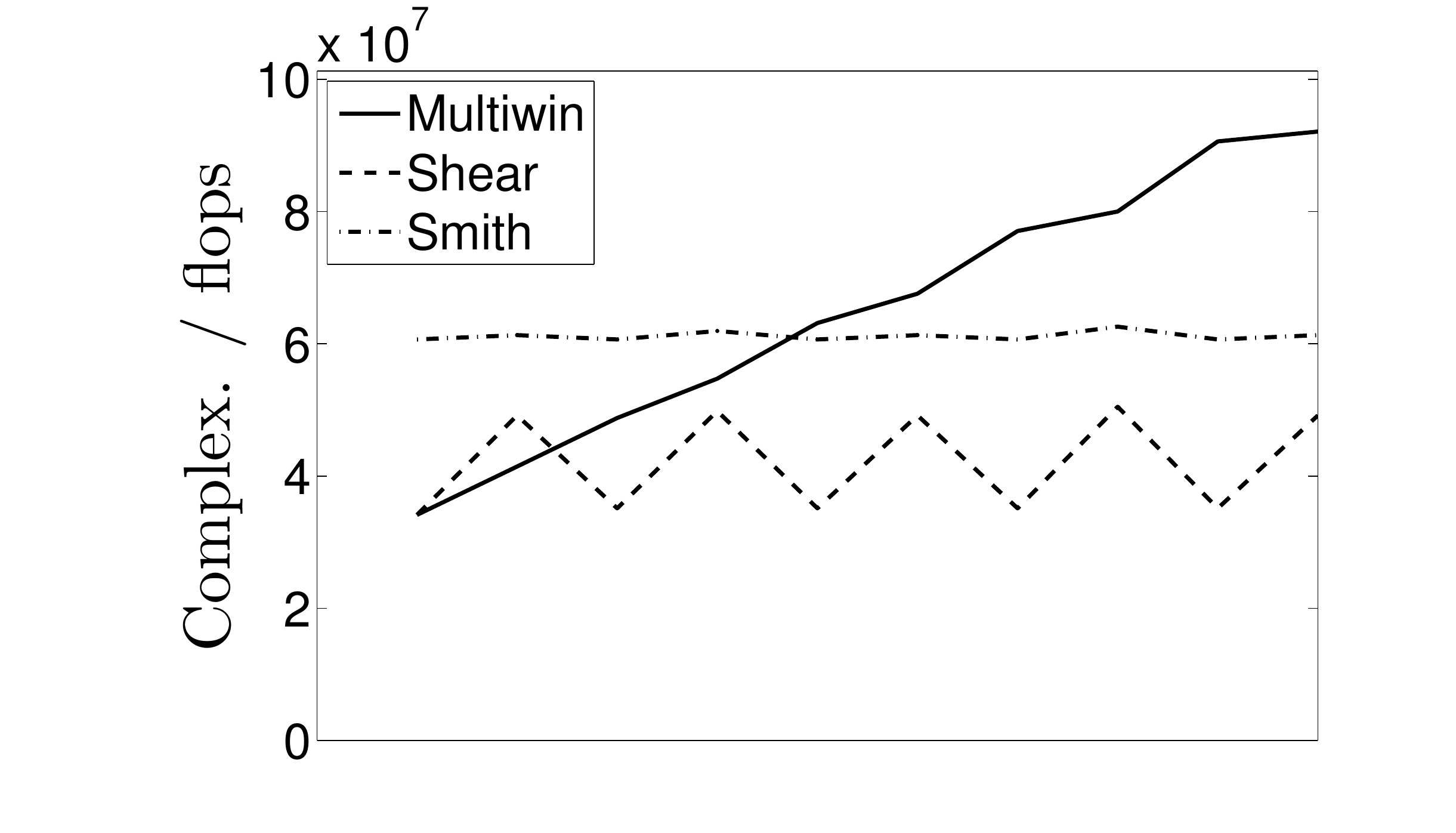}\includegraphics[height=0.32\textwidth,width=0.45\textwidth,trim=100 30 30 0, clip]{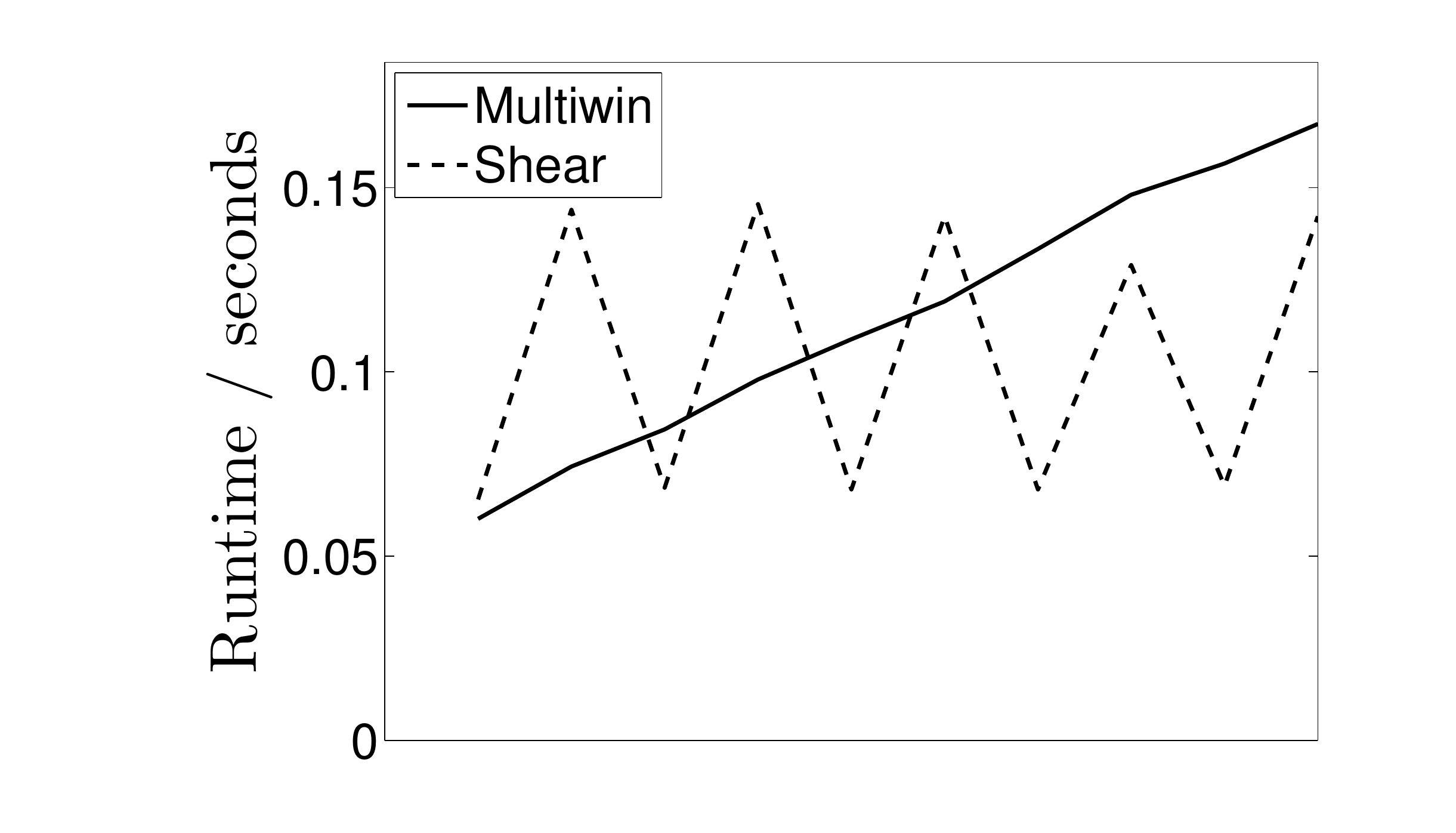}\\  
  \includegraphics[height=0.32\textwidth,width=0.45\textwidth,trim=80 30 30 0, clip]{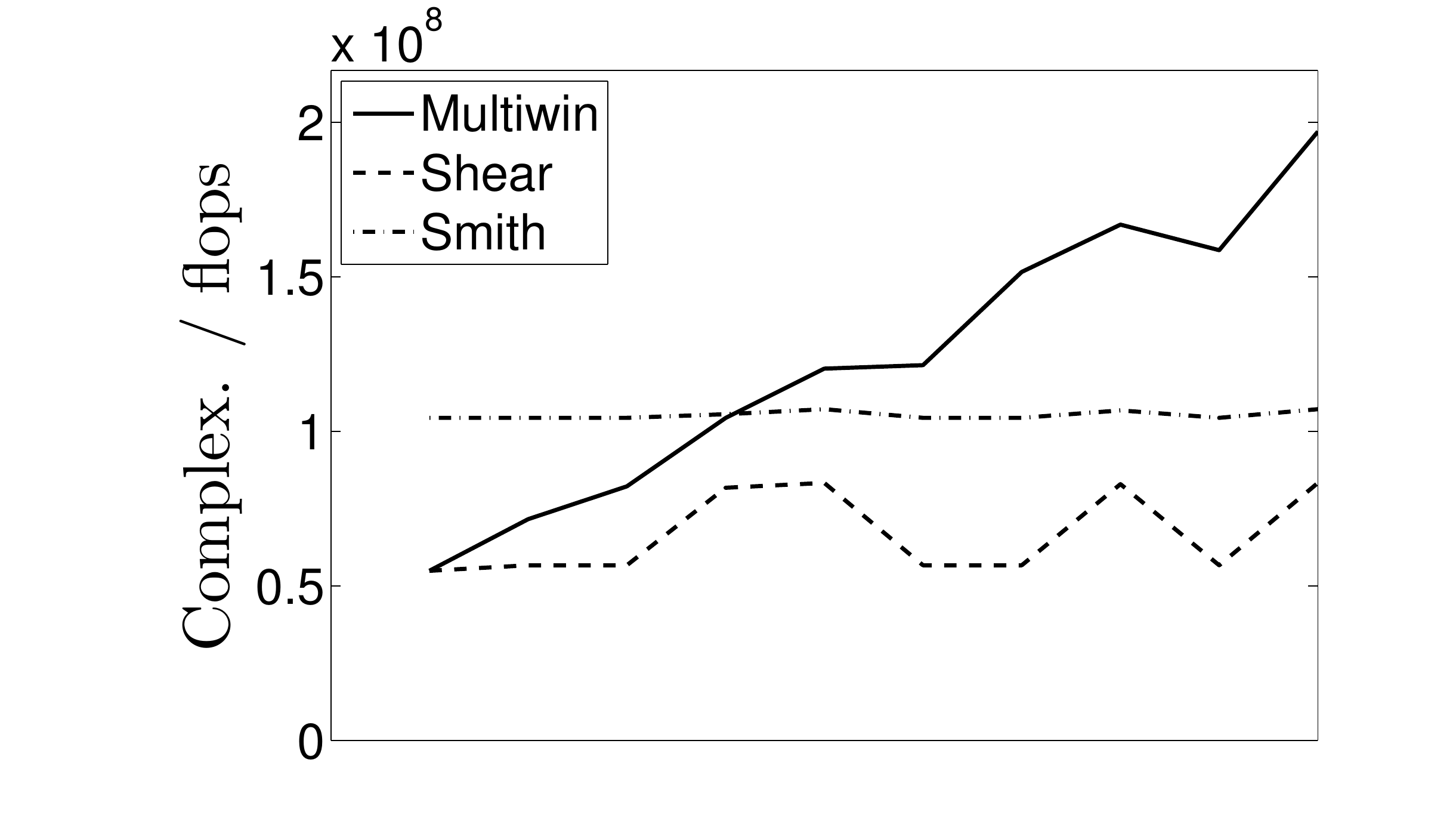}\includegraphics[height=0.32\textwidth,width=0.45\textwidth,trim=100 30 30 0, clip]{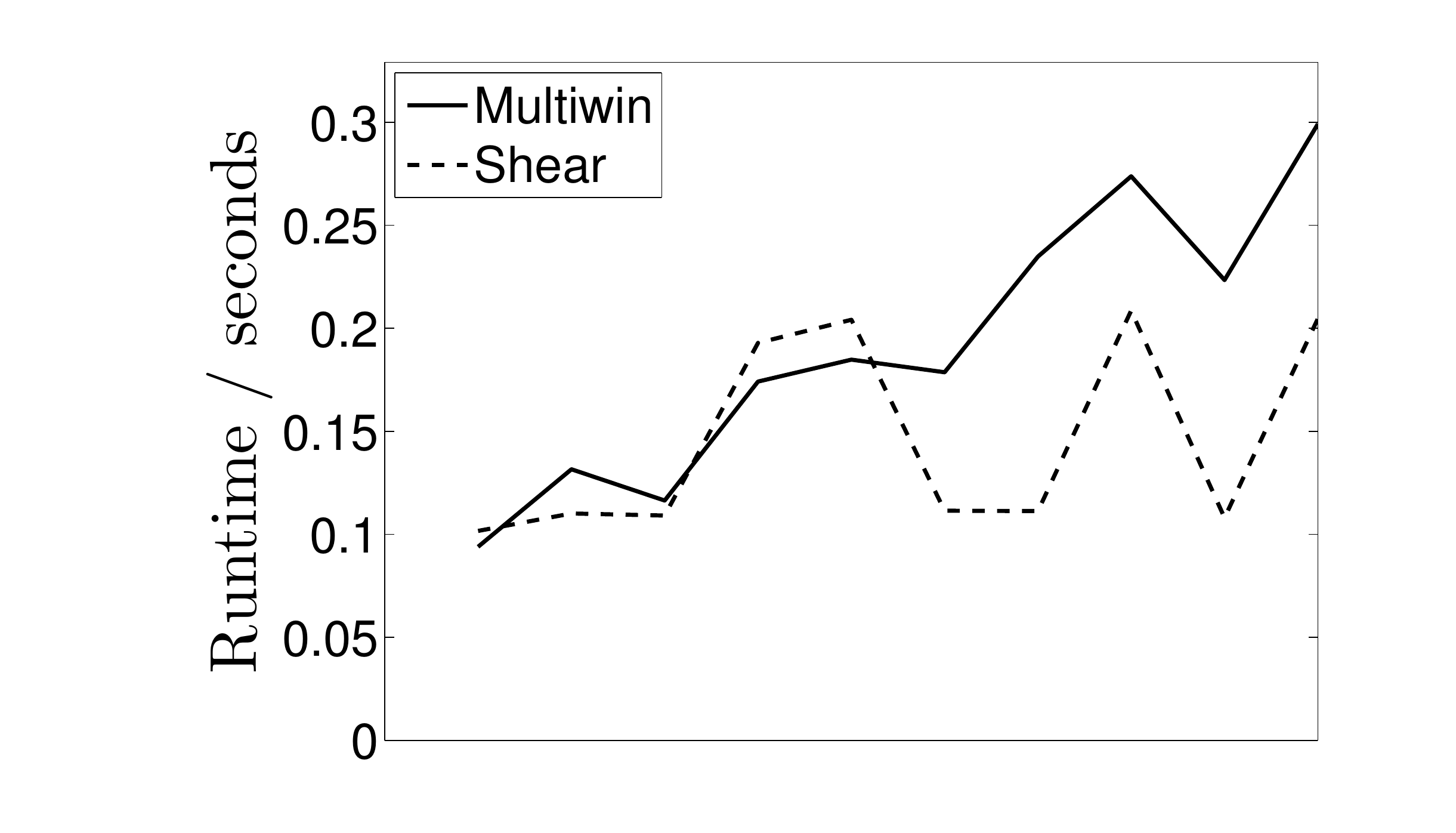}\\
  \includegraphics[height=0.34\textwidth,width=0.45\textwidth,trim=60 0 30 0, clip]{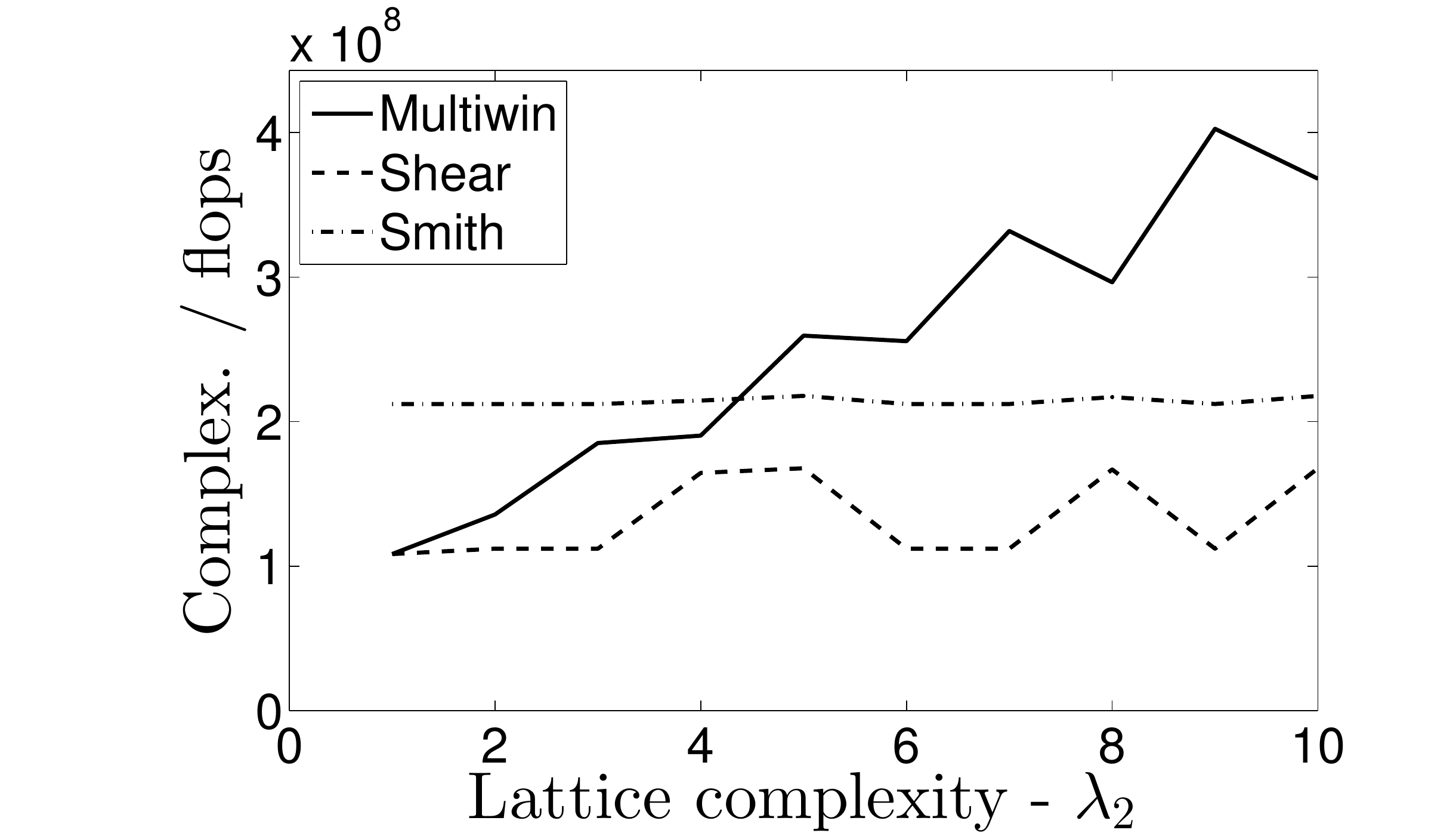}\includegraphics[height=0.34\textwidth,width=0.45\textwidth,trim=80 0 30 0, clip]{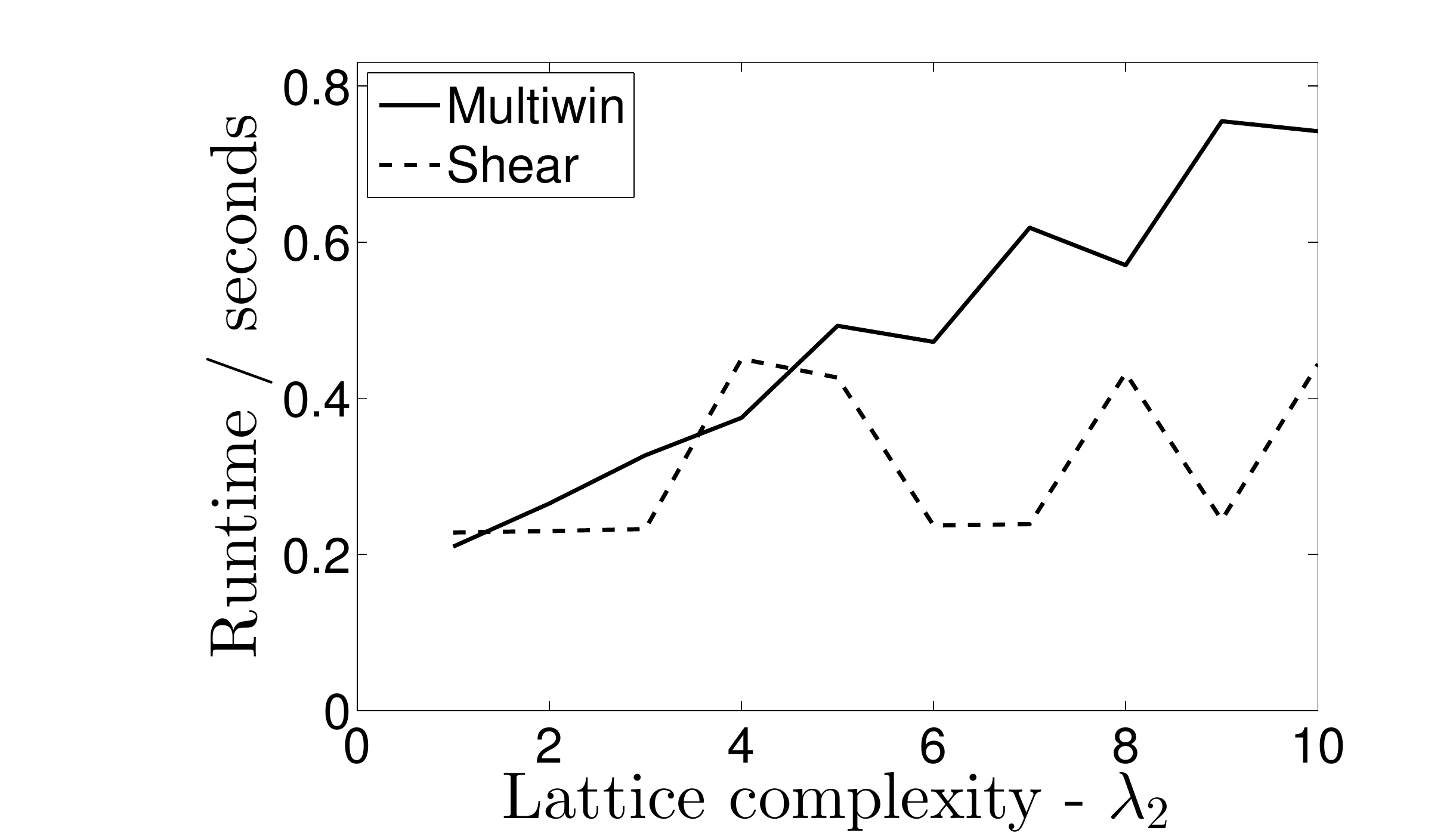}
  \caption{\label{fig:longer}Computation of the DGT for nonseparable
    lattices with increasing lattice complexities, $\lambda_2$. The
    length is kept fixed at $L=\lcm{a,M}\cdot 2520$ which is the
    minimal legal transform length for all the tested lattices. (left) 
    Accurate flop counts while the figures and (right) the actual running time. 
    The Gabor system parameters are $a=32$, $M=64$ ($p/q=1/2$) (1st row), 
    $a=40$, $M=60$ ($p/q=2/3$) (2nd row) and $a=60$, $M=80$
    ($p/q=3/4$) (3rd row). }
    \vspace{-10pt}
\end{figure}

The experiments shown in Figure \ref{fig:longer} illustrate how the
computational complexity of the running time of the algorithm depends
on the lattice complexity $\lambda_2$: The complexity of the shear
algorithm is independent of $\lambda_2$, while the complexity of the
multiwindow algorithm grows linearly. 

The bumps in the curves for the multiwindow algorithm are due to
variations in $q_{mw}$: The multiwindow algorithm transforms the
problem into $\lambda_2$ different DGTs that should be computed on a
lattice with redundancy $q/(p\lambda_2)$. The number $q_{mw}$ is the
nominator of this written as an irreducible fraction, and
depending on $p\lambda_2$ it may be smaller than $q$.

The bumps in the curves for the shear algorithm are caused by whether
or not a frequency side shear is required for that particular lattice
configuration, and to a lesser extend whether a time-side shear is
needed. As the multiwindow algorithm is faster for simple lattices,
there is a cross-over point where the shear algorithm becomes faster,
but the cross-over point depends strongly on the exact lattice
configuration. Just considering the flop counts would predict that the
cross-over happens for a smaller value of $\lambda_2$ that what is
really the case. This is due to the fact the there are more
complicated indexing operations and memory reshuffling for the shear
algorithm than for the multiwindow algorithm, and this is not properly
reflected in the flop count.

The cross-over point where one algorithm is faster than the other is
highly dependent on the interplay between the algorithm and the
computer architecture. Experience from the ATLAS \citep{whaley04}, FFTW
\citep{frjo05} and SPIRAL \citep{deMesmay2010} projects, show that in
order to have the highest performance, is it necessary to select the
algorithm for a given problem size based on previous tests done on the
very same machine. Performing such an optimization is beyond the scope
of this paper, and we therefore cannot make statements about how to
choose the most efficient cross-over points.

\section*{Acknowledgment}
This research was supported by the Austrian Science Fund (FWF) START-project FLAME (``Frames and Linear Operators for Acoustical Modeling and Parameter Estimation''; Y 551-N13) and the 
EU FET Open grant UNLocX (255931).


\end{document}